\providecommand{\U}[1]{\protect\rule{.1in}{.1in}}
\providecommand{\U}[1]{\protect\rule{.1in}{.1in}}
\providecommand{\U}[1]{\protect\rule{.1in}{.1in}}
\providecommand{\U}[1]{\protect\rule{.1in}{.1in}}
\providecommand{\U}[1]{\protect\rule{.1in}{.1in}}
\providecommand{\U}[1]{\protect\rule{.1in}{.1in}}
\providecommand{\U}[1]{\protect\rule{.1in}{.1in}}
\newtheorem{theorem}{Theorem}
\theoremstyle{plain}
\newtheorem{acknowledgement}{Acknowledgement}
\newtheorem{definition}{Definition}
\newtheorem{example}{Example}
\newtheorem{lemma}{Lemma}
\newtheorem{remark}{Remark}
\numberwithin{equation}{section}
\begin{document}
\title[Isoperimetric weights and generalized uncertainty inequalities]{Isoperimetric weights and generalized uncertainty inequalities in metric
measure spaces}
\author{Joaquim Mart\'{\i}n}
\address{Department of Mathematics\\
Universitat Aut\`{o}noma de Barcelona\\
jmartin@mat.uab.cat}
\author{Mario Milman}
\address{Instituto Argentino de Matematica\\
mario.milman@gmail.com\\
https://sites.google.com/site/mariomilman/}
\thanks{The first named author was partially supported by Grants MTM2013-44304-P,
MTM2013-40985-P, 2014SGR289}
\thanks{The second named author was partially supported by a grant from the Simons
Foundation (\#207929 to Mario Milman).}
\dedicatory{In memory of Nigel Kalton}\subjclass{ }
\maketitle

\begin{abstract}
We extend the recent $L^{1}$ uncertainty inequalities obtained in
\cite{daltre} to the metric setting. For this purpose we introduce a new class
of \ weights, named *isoperimetric weights*, for which the growth of the
measure of their level sets $\mu(\left\{  w\leq r\right\}  )$ can be
controlled by $rI(r),$ where $I$ is the isoperimetric profile of the ambient
metric space. We use isoperimetric weights, new *localized Poincar\'{e}
inequalities*, and interpolation, to prove $L^{p},1\leq p<\infty,$ uncertainty
inequalities on metric measure spaces. We give an alternate characterization
of the class of isoperimetric weights in terms of Marcinkiewicz spaces, which
combined with the sharp Sobolev inequalities of \cite{mamiadv}, and
interpolation of weighted norm inequalities, give new uncertainty inequalities
in the context of rearrangement invariant spaces.

\end{abstract}

\section{Introduction}

In a recent paper, Dall'ara-Trevisan \cite{daltre} extended the classical
uncertainty inequality (cf. \cite{weyl})\footnote{A detailed survey of the
uncertainty inequality and many related inequalities can be found in
\cite{folland}.}
\begin{equation}
\left\Vert f\right\Vert _{L^{2}(\mathbb{R}^{n})}^{2}\leq4n^{-2}\left\Vert
\left\vert \nabla f\right\vert \right\Vert _{L^{2}(\mathbb{R}^{n})}\left\Vert
\left\vert x\right\vert f\right\Vert _{L^{2}(\mathbb{R}^{n})},\text{ }f\in
C_{0}^{\infty}(\mathbb{R}^{n}), \label{intro1}%
\end{equation}
to a large class of homogenous spaces $M$ for a (Lie or finitely generated)
group $G$ such that the isotropy subgroups are compact and, furthermore, $M$
is endowed with an invariant measure $\mu$, an invariant distance $d,$ and an
invariant gradient which is compatible with $d$. To describe the weights
considered in \cite{daltre} let us observe that for each $r>0,$ the elements
of $B(r)$, the class of balls of radius $r$ in $M,$ have equal measure and,
consequently, one can consider the class of weights $w:M\rightarrow
\mathbb{R}^{+}$ that satisfy%
\begin{equation}
\mu(\{w\leq r\})\leq \Upsilon_{M}(r):=\mu(B(r)). \label{condbola}%
\end{equation}
In this setting, Dall'ara-Trevisan \cite{daltre} show that, for all weights
that satisfy (\ref{condbola}), there exists $c>0$ such that for all
$p\in\lbrack1,\infty),$%
\begin{equation}
\left\Vert f\right\Vert _{L^{p}(M,\mu)}^{2}\leq cp\left\Vert \left\vert \nabla
f\right\vert \right\Vert _{L^{p}(M,\mu)}\left\Vert wf\right\Vert _{L^{p}%
(M,\mu)}, \label{intro1i}%
\end{equation}
for all smooth functions $f$ satisfying suitably prescribed
cancellations\footnote{For example, if $M$ is compact, a natural normalization
condition is $\int fd\mu=0,$ and in the non-compact case (cf. \cite{daltre})
it is natural to require that the functions have compact support. In this
paper $Lip_{0}(\Omega)\,,$ will always denote the set of Lip functions with
compact support.}.

A new feature of the result is the fact that it is crucially valid for $p=1.$
Indeed, the inequalities for $L^{p},$ $p>1,$ follow from the $L^{1}$ case by a
familiar argument using the chain rule (cf. \cite{daltre}, \cite{sal}).
Furthermore, as in the classical theory of Sobolev inequalities of Maz'ya (cf.
\cite{mazya}, \cite{maz1}), the $L^{1}$ uncertainty inequalities are naturally
connected with isoperimetry. For example, when $M$ is compact, the
\textquotedblleft weak isoperimetric inequality\textquotedblright\ property
used in \cite{daltre} asserts the existence of a constant $C>0,$ such that for
all Borel sets $A$\thinspace and $E,$ with $\mu(A)\leq\mu(M)/2,$ and
$\mu(E)\leq \Upsilon_{M}(r),$ we have\footnote{For more on this we refer to
\cite{daltre} and Section \ref{nota3} below.}%
\begin{equation}
\mu(A\cap E)\leq Cr\mu^{+}(A), \label{condbola1}%
\end{equation}
where $\mu^{+}$ is a suitable notion of perimeter\footnote{Roughly speaking
\textquotedblleft$\mu^{+}(A)=\left\Vert |\nabla(\chi_{A})|\right\Vert _{L^{1}%
}".$} (cf. \cite{daltre}, and also \cite{coulh}). The proof of (\ref{intro1i})
in \cite{daltre} uses (\ref{condbola}) and (\ref{condbola1}) combined with
Poincar\'{e}'s inequality; furthermore, the group structure associated with
$M$ also plays a r\^{o}le.

The purpose of this paper is to extend (\ref{intro1i}) to the more general
context of metric measure spaces. In particular, we will eliminate the
dependence on any type of group structure as well as the requirement that the
measure of a ball depends only on its radius. In particular, our uncertainty
inequalities are also valid in the Gaussian setting. We are also able to
extend (\ref{intro1i}) to rearrangement invariant norms and establish a
principle that allows the transference of uncertainty inequalities between
different geometries, under the assumption that the underlying isoperimetric
profiles can be compared pointwise.

To explain in somewhat more detail the motivation behind the results, and the
methods we shall develop in this paper, we need to introduce some notation.

Let $(\Omega,\mu,d)$ be a connected metric measure space\footnote{We shall
list further assumptions on $(\Omega,\mu,d)$ as needed.}, such that
$\mu(K)<\infty,$ for compact sets $K\subset\Omega.$ The modulus of the
gradient of a Lip function $f$ is defined by
\[
\left\vert \nabla f(x)\right\vert =\limsup_{d(x,y)\rightarrow0}\frac
{|f(x)-f(y)|}{d(x,y)}.
\]
The perimeter or Minkowski content of a Borel set $A\subset\Omega,$ is defined
by
\[
\mu^{+}(A)=\liminf_{h\rightarrow0}\frac{\mu\left(  A_{h}\right)  -\mu\left(
A\right)  }{h},
\]
where $A_{h}=\left\{  x\in\Omega:d(x,A)<h\right\}  .$ The isoperimetric
profile\footnote{While isoperimetric profiles are very hard to compute
exactly, most of the estimates in this paper hold true if replace $I$ by a
lower bound estimator function, usually refered to as *an isoperimetric
estimator* (cf. \cite{mamiadv}).} $I:=I_{(\Omega,\mu,d)}$ associated with
$(\Omega,\mu,d),$ is the function $I:[0,\mu(\Omega))\rightarrow\mathbb{R}%
_{+},$ defined by%
\[
I(t)=\inf_{\mu(A)=t}\{\mu^{+}(A)\}.
\]
In what follows we shall only consider metric measure spaces such that $I$ is
concave, continuous, and zero at zero. Moreover, in the case of finite measure
spaces, i.e. when $\mu(\Omega)<\infty,$ we shall also assume that $I$ is
symmetric around $\mu(\Omega)/2.$ In particular, in this case $I$ will be
increasing on $(0,\mu(\Omega)/2),$ and decreasing on $(\mu(\Omega
)/2,\mu(\Omega)).$ Moreover, if $\mu(\Omega)=\infty,$ we assume that $I$ is increasing.

Our main focus will be on the validity of $L^{1}$ inequalities of the
form\footnote{Below we will also develop methods to treat uncertantity
inequalities for rather general rearrangement invariant norms.}%
\[
\left\Vert f\right\Vert _{L^{1}(\Omega,\mu)}^{2}\leq c\left\Vert \left\vert
\nabla f\right\vert \right\Vert _{L^{1}(\Omega,\mu)}\left\Vert wf\right\Vert
_{L^{1}(\Omega,\mu)},
\]
for all smooth functions $f$ that satisfy suitably prescribed cancellations.
As in \cite{daltre} one of our main tools will be local Poincar\'{e}
inequalities, but in our case, they are formulated using the isoperimetric
profile, and are valid for arbitrary measurable sets, rather than balls. For
example, if $\mu(\Omega)<\infty,$ we localize the usual Poincar\'{e}
inequality (cf. \cite{bobhou}, \cite{mamiadv}, and the references therein),
\begin{equation}
\int_{{\Omega}}\left\vert f-m(f)\right\vert d\mu\leq\frac{\mu(\Omega)}%
{2I(\mu(\Omega)/2)}\int_{{\Omega}}\left\vert \nabla f\right\vert d\mu,
\label{pr1}%
\end{equation}
as follows: For all $f\in Lip(\Omega)$ and for all measurable $A\subset\Omega$
we have (cf. Theorem \ref{ivan} below)%
\begin{equation}
\int_{{A}}\left\vert f-m(f)\right\vert d\mu\leq\frac{\min\{\mu(A),\mu
(\Omega)/2\}}{I(\min\{\mu(A),\mu(\Omega)/2\})}\int_{{\Omega}}\left\vert \nabla
f\right\vert d\mu, \label{pr1loc}%
\end{equation}
where $m(f)$ is a median of $f$ (cf. Section \ref{prel}).

A key role in our analysis is played by a class of weights, which we call
\textbf{isoperimetric weights}. A positive measurable function $w:\Omega
\rightarrow\mathbb{R}_{+}$ will be called an isoperimetric weight if there
exists a constant $C=C(w)$ such that%
\begin{equation}
\frac{\min\{\mu(\left\{  w\leq r\right\}  ),\mu(\Omega)/2\}}{I(\min
\{\mu(\left\{  w\leq r\right\}  ),\mu(\Omega)/2\})}\leq Cr,\text{ \ \ }r>0.
\label{pesoiso}%
\end{equation}
In particular, when $\mu(\Omega)=\infty,$ the condition (\ref{pesoiso}) takes
the simpler form\footnote{To understand the reason why condition
(\ref{pesoiso}) is slightly more complicated when $\mu(\Omega)<\infty,$ note
that if $\mu(\{w\leq r\})=1,$ then, since $I(1)=0,$ $I(\mu(\{w\leq r\}))=0$
and (\ref{pesoiso1}) has no meaning. A comparable phenomenon occurs with
condition (\ref{condbola}), which has no meaning when $r>$diameter of $M.$}%
\begin{equation}
\mu(\{w\leq r\})\leq CrI(\mu(\{w\leq r\})),\text{ \ \ }r>0. \label{pesoiso1}%
\end{equation}
As a consequence, the growth of the measure of the level sets of isoperimetric
weights is controlled by the isoperimetric profile associated with the
geometry. To get some insight on the difference between (\ref{pesoiso1}) and
(\ref{condbola}) we shall now briefly compare them in the context of
$\mathbb{R}^{n}.$ The classical weight used for Euclidean uncertainty
inequalities (cf. (\ref{intro1}) above) is $w(x)=\left\vert x\right\vert $.
For this weight, both conditions, (\ref{pesoiso1}) and (\ref{condbola}), are
satisfied, but the calculations needed for their verifications are different.
Indeed, let $\mu_{R^{n}}$ denote the Lebesgue measure on $\mathbb{R}^{n},$ and
let $w(x)=\left\vert x\right\vert ,$ then we have%
\[
\mu_{\mathbb{R}^{n}}(\{w\leq r\})=\mu_{\mathbb{R}^{n}}(\{\left\vert
x\right\vert \leq r\})=\beta_{n}r^{n},
\]
where $\beta_{n}$ is the measure of the unit ball; on the other hand, since
$I_{\mathbb{R}^{n}}(r)=n\left(  \beta_{n}\right)  ^{1/n}r^{(1-1/n)},$ we also
have
\begin{align*}
rI_{\mathbb{R}^{n}}(\mu(\{w  &  \leq r\}))=rn\left(  \beta_{n}\right)
^{1/n}(\beta_{n}r^{n})^{(1-1/n)}\\
&  =n\beta_{n}r^{n}.
\end{align*}
Thus, $w(x)=\left\vert x\right\vert $ satisfies both (\ref{pesoiso1}) and
(\ref{condbola}). In fact, more generally, for geometries that satisfy the
assumptions of \cite{daltre} and, moreover, have concave isoperimetric
profiles, we will show that if a weight $w$ satisfies (\ref{condbola}) then it
is an isoperimetric weight in our sense (cf. Section \ref{exam}, Theorem
\ref{treviteo}).

For isoperimetric weights we will show (cf. Theorem \ref{Teounl1} below) that
there exists a constant $c=c(w)$ such that, for all suitably normalized
Lipschitz functions $f,$ the following uncertainty inequality holds
\begin{equation}
\left\Vert f\right\Vert _{L^{1}(\Omega,\mu)}^{2}\leq c\left\Vert \left\vert
\nabla f\right\vert \right\Vert _{L^{1}(\Omega,\mu)}\left\Vert wf\right\Vert
_{L^{1}(\Omega,\mu)}. \label{arkansa}%
\end{equation}
Moreover, a weak converse holds. Namely, if (\ref{arkansa}) holds for a given
weight $w,$ then it is easy to see that the growth of the measure of the level
sets of $w$ must be controlled in some fashion by their corresponding
perimeters. More precisely, we have (cf. Remark \ref{xx} below),%
\[
\mu\left(  \{w\leq r\}\right)  \leq cr\mu^{+}\left(  \{w\leq r\}\right)  .
\]

From a technical point of view, the class of isoperimetric weights is useful
for our development in this paper since these weights are directly related to
the local Poincar\'{e} inequalities described above (cf. (\ref{pr1loc})). In
fact, with these tools at hand, combined with interpolation\footnote{Actually
we do not use the abstract theory of interpolation but simply the technique of
splitting a function into suitable pieces using its level sets.}, we are able
to adapt the main argument of \cite{daltre} to prove $L^{1}$ uncertainty
inequalities in our setting. As it turns out, there is still a different
characterization of the class of isoperimetric weights through the use of
rearrangements. Indeed, we will show that isoperimetric weights are functions
that belong to a Marcinkiewicz space whose fundamental function behaves
essentially like $\frac{t}{I(t)}$. We then observe that, in view of a
classical inequality of Hardy-Littlewood, the usual self improvements of
Sobolev inequalities can be formulated as weighted norm inequalities, where
the weights are precisely the isoperimetric weights! At this point we use
interpolation to derive new uncertainty inequalities for rearrangement
invariant norms.

Let $\Phi(t):=\Phi_{I}(t)=\frac{\min\{t,\mu(\Omega)/2\}}{I(\min\{t,\mu
(\Omega)/2\})},$ $t\in(0,\mu(\Omega)),$ then, since we assume that $I(t)$ is
concave, the function $\Phi$ is non-decreasing. It can be readily seen (cf.
Lemma \ref{lmar} in Section \ref{isoweigt} below) that $w$ is an isoperimetric
weight if and only if $W:=\frac{1}{w}$ belongs to the Marcinkiewicz space
$M(\Phi)=M(\Phi)(\Omega,\mu),$ of functions on $\Omega$ such that
\[
\left\Vert f\right\Vert _{M(\Phi)}=\sup_{t>0}t\Phi(\mu\{\left\vert
f\right\vert >t\})<\infty.
\]
In terms of rearrangements (cf. \cite{stewei} and Section \ref{isoweigt}
below) we can also write%
\[
\left\Vert f\right\Vert _{M(\Phi)}=\sup_{t>0}f_{\mu}^{\ast}(t)\Phi(t).
\]
In other words, $w$ is an isoperimetric weight if and only $W:=\frac{1}{w}$
satisfies%
\begin{equation}
\left\Vert W\right\Vert _{M(\Phi)}=\sup_{t>0}W_{\mu}^{\ast}(t)\Phi
(t)=\sup_{t>0}W_{\mu}^{\ast}(t)\frac{t}{I(t)}<\infty. \label{marcin}%
\end{equation}
To set the stage for the more general developments we shall present in Section
\ref{unrii}, let us briefly develop, in the more familiar Euclidean setting,
the connection of uncertainty inequalities with sharp Sobolev inequalities and
explain the r\^{o}le of the Marcinkiewicz space $M(\Phi)$. The key idea here
is that the classical Gagliardo-Nirenberg inequality\footnote{Here and in what
follows we let
\[
Lip_{0}(\Omega)=\{f\in Lip(\Omega):f\text{ \ has compact support\}}.
\]
}
\[
\left\Vert f\right\Vert _{L^{\frac{n}{n-1}}(\mathbb{R}^{n})}\leq\frac
{1}{n\left(  \beta_{n}\right)  ^{1/n}}\left\Vert \left\vert \nabla
f\right\vert \right\Vert _{L^{1}(\mathbb{R}^{n})},\text{ }f\in Lip_{0}%
(\mathbb{R}^{n}),
\]
self improves\footnote{Note that%
\[
\left\Vert f\right\Vert _{L^{\frac{n}{n-1}}}\leq\frac{n}{n-1}\left\Vert
f\right\Vert _{L^{\frac{n}{n-1},1}}.
\]
} to (cf. \cite{tal})%
\begin{equation}
\left\Vert f\right\Vert _{L^{\frac{n}{n-1},1}(\mathbb{R}^{n})}:=\int%
_{0}^{\infty}f^{\ast}(s)s^{1-1/n}\frac{ds}{s}\leq\frac{n^{\prime}}{\left(
\beta_{n}\right)  ^{1/n}}\left\Vert \left\vert \nabla f\right\vert \right\Vert
_{L^{1}(\mathbb{R}^{n})},\text{ }f\in Lip_{0}(\mathbb{R}^{n}). \label{marcin1}%
\end{equation}
This self improvement can be re-interpreted as an $L^{1}(\Omega,\mu)$ weighted
inequality. Indeed, suppose that $W\in M(\Phi)(\mathbb{R}^{n},\mu
_{\mathbb{R}^{n}}),$ where $\Phi(t)=t^{1/n},$ and $d\mu_{\mathbb{R}^{n}%
}(x)=dx$ is the Lebesgue measure. Then, for $f\in Lip_{0}(\mathbb{R}^{n}),$ we
have,%
\begin{align}
\left\Vert fW\right\Vert _{L^{1}(R^{n})}  &  =\int_{\mathbb{R}^{n}}\left\vert
f(x)\right\vert W(x)dx\nonumber\\
&  \leq\int_{R^{n}}f^{\ast}(t)W^{\ast}(t)dt\text{ (by the Hardy-Littlewood
inequality)}\nonumber\\
&  \leq\left\Vert W\right\Vert _{M(\Phi)}\int_{\mathbb{R}^{n}}f^{\ast
}(t)t^{1-1/n}\frac{dt}{t}\text{ \ (recall (\ref{marcin}))}\nonumber\\
&  \leq\frac{n^{\prime}}{\left(  \beta_{n}\right)  ^{1/n}}\left\Vert
W\right\Vert _{M(\Phi)}\left\Vert \left\vert \nabla f\right\vert \right\Vert
_{L^{1}(R^{n})},\text{ }f\in Lip_{0}(\mathbb{R}^{n})\text{ (by (\ref{marcin1}%
))}. \label{marcin3}%
\end{align}
It follows from H\"{o}lder's inequality\footnote{This argument was provided by
the referee, previously we had indicated a proof by interpolation.} that
\[
\int_{\mathbb{R}^{n}}\left\vert f(x)\right\vert dx\leq c_{n}\left\Vert
W\right\Vert _{M(\Phi)}^{1/2}\left\Vert \left\vert \nabla f\right\vert
\right\Vert _{L^{1}(\mathbb{R}^{n})}^{1/2}\left\Vert fw\right\Vert
_{L^{1}(\mathbb{R}^{n})}^{1/2}.
\]
The weighted norm inequality (\ref{marcin3}) appears already in \cite{faris},
and corresponds to one of the end points of the Strichartz
inequalities\footnote{Note that (\ref{marcin3}) and (\ref{marcinp}) are Hardy
type inequalities. In general, the case $p=1,$ seems to be new.} (cf.
\cite[Sec II, Theorem 3.6, page 1049]{strichartz}),%
\begin{equation}
\left\Vert fW\right\Vert _{L^{p}(\mathbb{R}^{n})}\leq c_{n}(p)\left\Vert
W\right\Vert _{L^{n,\infty}}\left\Vert \left\vert \nabla f\right\vert
\right\Vert _{L^{p}(\mathbb{R}^{n})},\text{ \ }1<p<n,\text{ \ }f\in
Lip_{0}(\mathbb{R}^{n}). \label{marcinp}%
\end{equation}
In fact, taking as a starting point the sharp Sobolev inequality of
Hardy-Littlewood-O'Neil\footnote{Comparing methods, in \cite{faris},
(\ref{marcinp}) is proved directly and then (\ref{sobo}) is obtained as a
corollary} \cite{oneil}%
\begin{equation}
\left\Vert f\right\Vert _{L^{\bar{p},p}(\mathbb{R}^{n})}\leq c_{n}\left\Vert
\left\vert \nabla f\right\vert \right\Vert _{L^{p}(\mathbb{R}^{n})},\ \frac
{1}{\bar{p}}=\frac{1}{p}-\frac{1}{n},\ 1\leq p<n, \label{sobo}%
\end{equation}
and following the argument that led us to (\ref{marcin3}) gives a proof of
(\ref{marcinp}) (cf. Section \ref{seccstrich} below). More generally, it is
perhaps a new observation that the corresponding weighted norm inequalities
implied by the sharp Sobolev inequalities of \cite{mamiadv} extend the
Strichartz \cite{strichartz} and Faris \cite{faris} inequalities to the
setting of r.i. norms (cf. Section \ref{seccstrich} below). Finally, let us
remark that this discussion gives another proof of the Euclidean space version
of the uncertainty inequality of Dall'ara-Trevisan \cite{daltre}. Indeed, for
$w(x)=\left\vert x\right\vert ,$ then $W(x)=\left\vert x\right\vert ^{-1},$
and\footnote{Here the notation $f\approx g$ indicates the existence of a
universal constant $c>0$ (independent of all parameters involved) such that
$(1/c)f\leq g\leq c\,f$.}
\[
c(W)=\left\Vert \left\vert x\right\vert ^{-1}\right\Vert _{M(\Phi)}%
\approx\left\Vert \left\vert x\right\vert ^{-1}\right\Vert _{L^{n,\infty
}(\mathbb{R}^{n})}=\beta^{1/n}.
\]

As a bonus, this approach to (\ref{intro1i}) allow us to also replace the
$L^{1}$ norms by rearrangement invariant norms. In fact, the argument can be
also adapted to deal Besov type conditions (cf. Section \ref{seccbesov}
below). The connection with Marcinkiewicz spaces makes it also easy to
actually construct isoperimetric weights for given geometries where we have a
lower bound on the corresponding isoperimetric profiles, as we show with
examples in Section \ref{exam} below. In particular, we show how to construct
isoperimetric weights for Gaussian or more generally log concave measures (cf.
Section \ref{logconcave}).

For perspective, the connection between (the classical) Sobolev inequalities
and Lorentz-Marcinkiewicz spaces $L^{p,\infty},$ has been known for a long
time, and already appears, albeit implicitly, in the work of Hardy-Littlewood,
and is already fully developed and exploited in the celebrated work of O'Neil
on convolution inequalities (cf. \cite{oneil}). It is in \cite{oneil} that one
finds explicitly the idea of using Marcinkiewicz spaces in order to treat
abstractly convolution with potentials of the form $w=f(d(x,x_{0})).$ In our
context, the \textquotedblleft good weights" \ belong to Marcinkiewicz spaces
whose very definition is given in terms of the underlying isoperimetric
profile. In the metric setting the use of weight functions of the form
$w=f(d(x,x_{0})),$ where $d$ is the underlying metric, is classical (cf.
\cite{oko} and the references therein). In particular, we remark that for
geometries where the measure of a ball is independent of the
radius\footnote{This condition fails for Gaussian measure (cf. \cite{forzani},
\cite[Proposition 5.1, page 52.]{urbina}).}, the functions of the form
$w(x)=d(x_{0},x),$ where $x_{0}$ is a fixed element of $\Omega,$ trivially
satisfy the condition (\ref{condbola}) with equality.

We should also mention that the characterization of isoperimetric weights
using Marcinkiewicz spaces also readily leads to a transference result for
uncertainty inequalities which we formulate in Section \ref{secc:transfer} below.

Finally, and without any claim to completeness, we give a sample of recent
references that treat uncertainty inequalities in different contexts and with
different levels of generality, where the reader may find further references
to the large literature in this field (cf. \cite{ciattia}, \cite{ciatti},
\cite{martini}, \cite{oko}, \cite{ricci}), we also should mention the
classical papers by Fefferman \cite{fe} and Beckner \cite{bk}, \cite{bk1}.

\section{Preliminaries\label{prel}}

In this section we establish some further notation and background information
and we provide more details about isoperimetric weights. Let $(\Omega,\mu,d)$
be a metric measure space as described in the Introduction.

\subsection{Medians}

In this subsection we assume that $\mu(\Omega)<\infty.$

\begin{definition}
Let $f:\Omega\rightarrow\mathbb{R}$ be an integrable function. We say that
$m(f)$ is a median of $f$ if%
\[
\mu\{f\geq m(f)\}\geq\mu(\Omega)/2;\text{ and }\mu\{f\leq m(f)\}\geq\mu
(\Omega)/2.
\]

\end{definition}

For later use we record the following elementary estimate of the median, and
provide the easy proof for the sake of completeness,%
\begin{equation}
\left\vert m(f)\right\vert \leq\frac{2}{\mu(\Omega)}\int_{\Omega}\left\vert
f\right\vert d\mu. \label{lestima}%
\end{equation}

\begin{proof}
We use Chebyshev's inequality and the definition of median as follows. On the
one hand,
\begin{align*}
m(f)  &  =m(f)\mu\{f\geq m(f)\}\frac{1}{\mu\{f\geq m(f)\}}\\
&  \leq\frac{1}{\mu\{f\geq m(f)\}}\int_{\{f\geq m(f)\}}fd\mu\\
&  \leq\frac{2}{\mu(\Omega)}\int_{\Omega}\left|  f\right|  d\mu.
\end{align*}
On the other hand, we similarly have%
\begin{align*}
m(f)  &  =m(f)\mu\{f\leq m(f)\}\frac{1}{\mu\{f\leq m(f)\}}\\
&  =\frac{m(f)}{\mu\{f\leq m(f)\}}\int_{\{f\leq m(f)\}}d\mu\\
&  \geq\frac{1}{\mu\{f\leq m(f)\}}\int_{\{f\leq m(f)\}}fd\mu.
\end{align*}
Hence,%
\begin{align*}
-m(f)  &  \leq\frac{1}{\mu\{f\leq m(f)\}}\int_{\{f\leq m(f)\}}-fd\mu\\
&  \leq\frac{1}{\mu\{f\leq m(f)\}}\int_{\Omega}\left|  f\right|  d\mu\\
&  \leq\frac{2}{\mu(\Omega)}\int_{\Omega}\left|  f\right|  d\mu.
\end{align*}
Combining estimates (\ref{lestima}) follows.
\end{proof}

\subsection{Rearrangement invariant spaces\label{secc:ri}}

Let $u:\Omega\rightarrow\mathbb{R},$ be a measurable function. The
\textbf{distribution function} of $u$ is given by
\[
\mu_{u}(t)=\mu\{x\in{\Omega}:\left\vert u(x)\right\vert >t\}\text{
\ \ \ \ }(t\geq0).
\]
The \textbf{decreasing rearrangement} of a function $u$ is the
right-continuous non-increasing function from $[0,\mu(\Omega))$ into
$\mathbb{R}^{+}$ which is equimeasurable with $u.$ It can be defined by the
formula
\[
u_{\mu}^{\ast}(s)=\inf\{t\geq0:\mu_{u}(t)\leq s\},\text{ \ }s\in\lbrack
0,\mu(\Omega)),
\]
and satisfies
\[
\mu_{u}(t)=\mu\{x\in{\Omega}:\left\vert u(x)\right\vert >t\}=m\left\{
s\in\lbrack0,\mu(\Omega)):u_{\mu}^{\ast}(s)>t\right\}  \text{, \ }t\geq0,
\]
(where $m$ denotes the Lebesgue measure on $[0,\mu(\Omega)).$ It follows from
the definition that
\begin{equation}
\left(  u+v\right)  _{\mu}^{\ast}(s)\leq u_{\mu}^{\ast}(s/2)+v_{\mu}^{\ast
}(s/2). \label{a1}%
\end{equation}

The maximal average $u_{\mu}^{\ast\ast}(t)$ is defined by
\[
u_{\mu}^{\ast\ast}(t)=\frac{1}{t}\int_{0}^{t}u_{\mu}^{\ast}(s)ds=\frac{1}%
{t}\sup\left\{  \int_{E}\left\vert u(s)\right\vert d\mu:\mu(E)=t\right\}
,\text{ }t>0.
\]
The operation $u\rightarrow u_{\mu}^{\ast\ast}$ is sub-additive, i.e.
\begin{equation}
\left(  u+v\right)  _{\mu}^{\ast\ast}(s)\leq u_{\mu}^{\ast\ast}(s)+v_{\mu
}^{\ast\ast}(s), \label{a2}%
\end{equation}
and moreover,
\begin{equation}
\int_{0}^{t}(uv)_{\mu}^{\ast}(s)ds\leq\int_{0}^{t}u_{\mu}^{\ast}(s)v_{\mu
}^{\ast}(s)ds,\text{ }t>0. \label{a3}%
\end{equation}

On occasion, when rearrangements are taken with respect to the Lebesgue
measure or when the measure is clear from the context, we may omit the measure
and simply write $u^{\ast}$ and $u^{\ast\ast}$, etc.

We now recall briefly the basic definitions and conventions we use from the
theory of rearrangement-invariant (r.i.) spaces and refer the reader to
\cite{BS} for a complete treatment. We say that a Banach function
space\footnote{We use the definition of Banach function space that one can
find in \cite{BS} which, in particular, assumes that the spaces have the Fatou
property.} $X=X({\Omega}),$ on $({\Omega},d,\mu),$ is a
rearrangement-invariant (r.i.) space, if $g\in X$ implies that all $\mu
-$measurable functions $f$ with the same rearrangement with respect to the
measure $\mu$, i.e. such that $f_{\mu}^{\ast}=g_{\mu}^{\ast},$ also belong to
$X,$ and, moreover, $\Vert f\Vert_{X}=\Vert g\Vert_{X}$.

When dealing with r.i. spaces we will always assume that $({\Omega},d,\mu)$ is
resonant in the sense of \cite[Definition 2.3, pag 45]{BS}.

For any r.i. space $X({\Omega})$ we have%

\[
L^{\infty}(\Omega)\cap L^{1}(\Omega)\subset X(\Omega)\subset L^{1}%
(\Omega)+L^{\infty}(\Omega),
\]
with continuous embeddings. In particular, if $\mu$ is finite, then%
\[
L^{\infty}(\Omega)\subset X(\Omega)\subset L^{1}(\Omega).
\]

An r.i. space $X({\Omega})$ can be represented by a r.i. space on the interval
$(0,\mu(\Omega)),$ with Lebesgue measure, $\bar{X}=\bar{X}(0,\mu(\Omega)),$
such that (see \cite[Theorem 4.10 and subsequent remarks]{BS})
\[
\Vert f\Vert_{X}=\Vert f_{\mu}^{\ast}\Vert_{\bar{X}},
\]
for every $f\in X.$ Typical examples of r.i. spaces are the $L^{p}$-spaces,
Lorentz spaces, Marcinkiewicz spaces and Orlicz spaces.

A useful property of r.i. spaces states that if
\[
\int_{0}^{t}\left\vert f\right\vert _{\mu}^{\ast}(s)ds\leq\int_{0}%
^{t}\left\vert g\right\vert _{\mu}^{\ast}(s)ds,
\]
\ holds for all\ $t>0,$ and $X$ is a r.i. space, then,%
\begin{equation}
\left\Vert f\right\Vert _{X}\leq\left\Vert g\right\Vert _{X}. \label{hardyine}%
\end{equation}

\subsection{Isoperimetric weights\label{isoweigt}}

We give a formal discussion of the notion of isoperimetric weight and its
characterization in terms of Marcinkiewicz spaces.

\begin{definition}
We will say that a locally integrable function $w:\Omega\rightarrow
\mathbb{R}_{+}$ is an \textbf{isoperimetric weight}, if $w>0$ a.e.$,$ and
there exists a constant $C:=C(w)>0$ such that%
\begin{equation}
\frac{\min\{\mu(\left\{  w\leq r\right\}  ),\mu(\Omega)/2\}}{I(\min
\{\mu(\left\{  w\leq r\right\}  ),\mu(\Omega)/2\})}\leq Cr,\text{ \ \ }r>0.
\label{unouno}%
\end{equation}

\end{definition}

It is easy to see that (\ref{unouno}) is equivalent to%
\begin{equation}
\frac{\min\{\mu(\left\{  w<r\right\}  ),\mu(\Omega)/2\}}{I(\min\{\mu(\left\{
w<r\right\}  ),\mu(\Omega)/2\})}\leq Cr,\ \ r>0. \label{unounoprima}%
\end{equation}

In what follows we write%
\[
C(w):=\sup_{r>0}\frac{1}{r}\frac{\min\{\mu(\left\{  w\leq r\right\}
),\mu(\Omega)/2\}}{I(\min\{\mu(\left\{  w\leq r\right\}  ),\mu(\Omega)/2\})}.
\]

Let $\Phi(t)$ $=\frac{\min\{t,\mu(\Omega)/2\}}{I(\min\{t,\mu(\Omega)/2\})},$
$t\in(0,\mu(\Omega)).$ The \textbf{Marcinkiewicz} $M\left(  \Phi\right)
(\Omega)$ is defined by the condition%
\[
\left\Vert f\right\Vert _{M(\Phi)}=\sup_{t>0}t\Phi(\mu_{f}(t))<\infty.
\]
Since $f_{\mu}^{\ast}$ and $\mu_{f}$ are generalized inverses of each other a
simple argument (cf. \cite{stewei}) shows that $f\in M\left(  \Phi\right)
(\Omega)$ if and only if%
\[
\left\Vert f\right\Vert _{M(\Phi)}=\sup_{t>0}f_{\mu}^{\ast}(t)\Phi(t)<\infty.
\]

The previous discussion leads to the following

\begin{lemma}
\label{lmar}$w$ is an isoperimetric weight if and only if $W:=\frac{1}{w}\in
M\left(  \Phi\right)  (\Omega).$
\end{lemma}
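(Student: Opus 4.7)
The plan is to prove Lemma~\ref{lmar} by a direct change of variables, essentially rewriting the isoperimetric-weight condition in terms of the distribution function of $W=1/w$ and recognizing the resulting expression as the Marcinkiewicz norm.

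First I would observe that since $w>0$ a.e., the level sets transform as
\[
\{x\in\Omega : w(x)\le r\}=\{x\in\Omega : W(x)\ge 1/r\}, \qquad r>0,
\]
up to a null set. Using the equivalence of the conditions (\ref{unouno}) and (\ref{unounoprima}) already recorded in the text (non-strict vs.\ strict inequalities), we may replace $\mu(\{w\le r\})$ by $\mu(\{W>1/r\})=\mu_{W}(1/r)$ without loss of generality; the right-continuity of $\mu_W$ and of $\Phi$ (which follows from the concavity and continuity of $I$) ensures the two suprema agree.

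Next, with $\Phi(t)=\frac{\min\{t,\mu(\Omega)/2\}}{I(\min\{t,\mu(\Omega)/2\})}$, condition (\ref{unouno}) reads exactly
\[
\Phi\bigl(\mu_{W}(1/r)\bigr)\le C\,r, \qquad r>0.
\]
The substitution $t=1/r$ turns this into
\[
t\,\Phi\bigl(\mu_{W}(t)\bigr)\le C,\qquad t>0,
\]
and taking the supremum over $t>0$ gives
\[
\|W\|_{M(\Phi)}=\sup_{t>0}t\,\Phi(\mu_{W}(t))\le C,
\]
using the distribution-function form of the Marcinkiewicz norm recalled just above the lemma statement (equivalent to $\sup_{t>0}W_\mu^{\ast}(t)\Phi(t)$ via the generalized-inverse relation between $W_\mu^{\ast}$ and $\mu_W$).

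The converse direction is identical: if $\|W\|_{M(\Phi)}<\infty$, reading the chain of equalities and the change of variable $r=1/t$ backwards recovers (\ref{unouno}) with $C=\|W\|_{M(\Phi)}$. In particular, the optimal constants match: $C(w)=\|1/w\|_{M(\Phi)}$. The only mildly subtle point is the strict-vs-nonstrict issue at the jumps of the distribution function, but this is precisely the content of the equivalence between (\ref{unouno}) and (\ref{unounoprima}) that the paper has already noted, so no further work is required.
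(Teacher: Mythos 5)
Your argument is correct and is essentially the paper's own proof: both rewrite the level sets of $w$ as level sets of $W=1/w$, identify $\mu(\{w<r\})$ with the distribution function $\mu_{W}(1/r)$, and perform the substitution $t=1/r$ to recognize the supremum as the Marcinkiewicz norm in its distribution-function form, yielding $C(w)=\|W\|_{M(\Phi)}$. Your explicit appeal to the equivalence of (\ref{unouno}) and (\ref{unounoprima}) to handle the strict-versus-nonstrict inequality at the level sets is exactly the step the paper relies on as well.
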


\begin{proof}
From (\ref{unounoprima}) and the fact that $\mu(\left\{  w<r\right\}
)=\mu_{W}(\frac{1}{r}),$ we have%
\begin{align*}
C(w)  &  =\sup_{r>0}\frac{1}{r}\frac{\min\{\mu(\left\{  w<r\right\}
),\mu(\Omega)/2\}}{I(\min\{\mu(\left\{  w<r\right\}  ),\mu(\Omega)/2\})}\\
&  =\sup_{r>0}\frac{1}{r}\frac{\min\{\mu_{W}(\frac{1}{r}),\mu(\Omega
)/2\}}{I(\min\{\mu_{W}(\frac{1}{r}),\mu(\Omega)/2\})}\\
&  =\sup_{r>0}r\Phi\mu_{W}(r)\\
&  =\left\Vert W\right\Vert _{M(\Phi)}.
\end{align*}

\end{proof}

\begin{remark}
\label{xx}In some sense we don't have too many choices of weights in order for
uncertainty inequalities to be true. For example, suppose that $\mu
(\Omega)=\infty,$ and $w$ is a weight such that
\begin{equation}
\left\Vert f\right\Vert _{L^{1}(\Omega)}^{2}\leq c\left\Vert \left\vert \nabla
f\right\vert \right\Vert _{L^{1}(\Omega)}\left\Vert wf\right\Vert
_{L^{1}(\Omega)} \label{propiedades}%
\end{equation}
holds for all $f\in Lip_{0}(\Omega).$ Suppose that $\mu(\{w\leq t\})<\infty$,
then,
\begin{equation}
\mu(\{w\leq t\})\leq ct\mu^{+}(\{w\leq t\}). \label{propiedades1}%
\end{equation}

\end{remark}

\begin{proof}
We can select $f_{n}\in Lip_{0}(\Omega)$ such that%
\[
\left\Vert \left\vert \nabla f_{n}\right\vert \right\Vert _{L^{1}(\Omega
)}\rightarrow\mu^{+}(\{w\leq t\})
\]
while%
\[
\left\Vert wf_{n}\right\Vert _{L^{1}(\Omega)}\rightarrow\int_{\{w\leq
t\}}wd\mu
\]
and%
\[
\left\Vert f_{n}\right\Vert _{L^{1}(\Omega)}^{2}\rightarrow\mu^{2}(\{w\leq
t\}).
\]
Inserting this information back in (\ref{propiedades}), we have%
\begin{align*}
\mu^{2}(\{w  &  \leq t\})\leq c\mu^{+}(\{w\leq t\})\int_{\{w\leq t\}}wd\mu\\
&  \leq c\mu^{+}(\{w\leq t\})t\mu(\{w\leq t\}),
\end{align*}
and (\ref{propiedades1}) follows.
\end{proof}

We now prove the localized Poincar\'{e} inequality described in the Introduction.

\begin{theorem}
\label{ivan}Suppose that $\mu(\Omega)<\infty.$ Then, for all $f\in
Lip(\Omega),$ and for all measurable $A\subset\Omega,$ we have%
\begin{equation}
\int_{{A}}\left\vert f-m(f)\right\vert d\mu\leq\frac{\min\{\mu(A),\mu
(\Omega)/2\}}{I(\min\{\mu(A),\mu(\Omega)/2\})}\int_{{\Omega}}\left\vert \nabla
f\right\vert d\mu. \label{poinlocal}%
\end{equation}

\end{theorem}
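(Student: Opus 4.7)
My plan is to reduce to a one-sided estimate on the positive and negative parts of $f-m(f)$, and then combine the layer-cake formula for the left-hand side with the isoperimetric/coarea lower bound on $\int_\Omega|\nabla f|\,d\mu$ via an elementary pointwise comparison of the concave profile~$I$ against its secant at $\alpha$.

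\medskip

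\emph{Step 1 (reduction).} Replacing $f$ by $f-m(f)$ (gradients unchanged), we may assume $m(f)=0$. Then both $\mu\{f>0\}$ and $\mu\{f<0\}$ are at most $\mu(\Omega)/2$ by the definition of median. Write $f=f^{+}-f^{-}$; the pointwise identity $|\nabla f^{+}|+|\nabla f^{-}|=|\nabla f|$ a.e.\ reduces the theorem to proving, separately for each $h\in\{f^{+},f^{-}\}$,
\[
\int_{A} h\,d\mu \;\le\; \frac{\alpha}{I(\alpha)}\int_{\Omega}|\nabla h|\,d\mu, \qquad \alpha:=\min\{\mu(A),\mu(\Omega)/2\},
\]
and then summing. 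Note that by construction $h\ge 0$ and $\mu\{h>0\}\le\mu(\Omega)/2$.

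\medskip

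\emph{Step 2 (layer cake vs.\ coarea + isoperimetry).} Cavalieri's principle gives
\[
\int_{A} h\,d\mu = \int_{0}^{\infty}\mu\bigl(A\cap\{h>t\}\bigr)\,dt,
\]
with $\mu(A\cap\{h>t\})\le\min\{\mu(A),\mu\{h>t\}\}=\min\{\alpha,\mu\{h>t\}\}$ (the second equality because $\mu\{h>t\}\le\mu(\Omega)/2$, so the bound $\mu(A)$ is active only when it equals $\alpha$). On the other hand, the coarea inequality for Lipschitz functions in the metric setting together with the definition of the isoperimetric profile gives
\[
\int_{\Omega}|\nabla h|\,d\mu \;\ge\; \int_{0}^{\infty}\mu^{+}\bigl(\{h>t\}\bigr)\,dt \;\ge\; \int_{0}^{\infty} I\bigl(\mu\{h>t\}\bigr)\,dt.
\]

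\medskip

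\emph{Step 3 (key comparison).} It remains to show the pointwise inequality
\[
\min\{\alpha,s\}\;\le\;\frac{\alpha}{I(\alpha)}\,I(s) \qquad \text{for all } s\in[0,\mu(\Omega)/2],
\]
because feeding $s=\mu\{h>t\}$ and integrating in $t$ couples the two estimates of Step~2 into the desired inequality for $h$. For $0\le s\le\alpha$, the claim is $I(\alpha)/\alpha\le I(s)/s$, which holds because $I$ is concave with $I(0)=0$, making $t\mapsto I(t)/t$ nonincreasing. For $\alpha<s\le\mu(\Omega)/2$, the claim reduces to $I(\alpha)\le I(s)$, which holds because $I$ is increasing on $(0,\mu(\Omega)/2]$. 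Adding the two estimates for $h=f^{+}$ and $h=f^{-}$ yields~(\ref{poinlocal}).

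\medskip

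There is no serious obstacle: the proof is a rearrangement--coarea calculation. The only delicate ingredient is the coarea inequality $\int_{\Omega}|\nabla h|\,d\mu\ge\int_{0}^{\infty}\mu^{+}(\{h>t\})\,dt$ for Lipschitz functions in the metric measure setting, which is implicit in the standing assumptions (it is the standard Bobkov--Houdré--Maz'ya type estimate alluded to in the Introduction and in the references cited there).
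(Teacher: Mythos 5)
Your overall strategy---the layer-cake representation of the left-hand side, the comparison $\min\{\alpha,s\}\le \frac{\alpha}{I(\alpha)}I(s)$ obtained from the monotonicity of $t\mapsto I(t)/t$ and of $I$ on $(0,\mu(\Omega)/2]$, and the coarea/isoperimetric bound (\ref{ledoux}) on the right---is exactly the paper's, and your Steps 2 and 3 are correct as written. The one genuine gap is the ``pointwise identity $|\nabla f^{+}|+|\nabla f^{-}|=|\nabla f|$ a.e.''\ in Step 1, which you need in order to recombine the two one-sided estimates and which you assert without proof. With the gradient defined as $|\nabla f|(x)=\limsup_{d(x,y)\to 0}|f(x)-f(y)|/d(x,y)$, one does have $|\nabla f^{+}|=|\nabla f|$ and $|\nabla f^{-}|=0$ on the open set $\{f>0\}$ (and symmetrically on $\{f<0\}$), but at a point $x$ with $f(x)=0$ both $|\nabla f^{+}|(x)$ and $|\nabla f^{-}|(x)$ can equal $|\nabla f|(x)$ (already $f(y)=y$ at $x=0$ in $\mathbb{R}$ gives $1+1>1$), so pointwise the sum can be as large as $2|\nabla f|$ on $\{f=0\}$. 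In $\mathbb{R}^{n}$ the set of such bad points is Lebesgue-null by Rademacher's theorem, but in the general metric measure setting of the theorem there is no differentiation theorem available, and nothing in the standing hypotheses prevents $\{f=0\}$ from carrying positive measure at points where $f$ changes sign at linear rate; hence the inequality $\int_\Omega|\nabla f^{+}|\,d\mu+\int_\Omega|\nabla f^{-}|\,d\mu\le\int_\Omega|\nabla f|\,d\mu$ that your summation step requires is left unjustified.

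The repair is to not split the gradient at all, which is how the paper proceeds: write $\int_{A}|f-m(f)|\,d\mu=\int_{m(f)}^{\infty}\mu(\{f>s\}\cap A)\,ds+\int_{-\infty}^{m(f)}\mu(\{f\le s\}\cap A)\,ds$, bound each integrand by $\frac{\alpha}{I(\alpha)}\,I(\mu\{f>s\})$, respectively $\frac{\alpha}{I(\alpha)}\,I(\mu\{f\le s\})$, exactly as in your Step 3 (both level sets have measure at most $\mu(\Omega)/2$ on the relevant ranges of $s$, by the median property), and then invoke the standing assumption that $I$ is symmetric about $\mu(\Omega)/2$ to convert $I(\mu\{f\le s\})$ into $I(\mu(\Omega)-\mu\{f\le s\})=I(\mu\{f>s\})$. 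The two integrals then combine into $\int_{-\infty}^{\infty}I(\mu\{f>s\})\,ds$, and a single application of (\ref{ledoux}) to $f$ itself finishes the proof. Everything else in your write-up survives unchanged; the symmetry hypothesis on $I$, which your argument never uses, is precisely what replaces the unproved gradient identity.
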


Before giving the simple proof we recall the following important consequence
of the co-area formula and the definition of isoperimetric profile (cf.
\cite{bobhou}, \cite{mamiadv}).

\begin{theorem}
(i) Suppose that $\mu(\Omega)<\infty$ (resp. $\mu(\Omega)=\infty).$ Then, for
all $f\in Lip(\Omega)$ (resp. for all $f\in Lip_{0}(\Omega)),$ we have
\begin{equation}
\int_{-\infty}^{\infty}I(\mu(\{f>s\}))ds\leq\int_{{\Omega}}\left\vert \nabla
f\right\vert d\mu. \label{ledoux}%
\end{equation}

\end{theorem}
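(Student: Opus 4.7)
The plan is a short two-ingredient argument combining the metric-space co-area formula with the defining inequality of the isoperimetric profile $I$.

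First I would invoke the co-area formula for Lipschitz functions, which in this setting reads
$$\int_{\Omega}\left\vert\nabla f\right\vert\,d\mu \;=\; \int_{-\infty}^{\infty} \mu^+(\{f>s\})\,ds,$$
valid for every $f\in Lip(\Omega)$ (respectively $f\in Lip_0(\Omega)$ when $\mu(\Omega)=\infty$). This identity is the metric analogue of the classical Fleming--Rishel formula and is available in the references cited in the statement (\cite{bobhou}, \cite{mamiadv}); the standard proof approximates $f$ by simple functions built from its superlevel sets, uses subadditivity/lower semicontinuity of $\mu^+$, and passes to the limit via Fatou.

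Next I would apply the definition of $I$ pointwise in $s$. By the very definition $I(t)=\inf_{\mu(A)=t}\mu^+(A)$, for every Borel set $A\subset\Omega$ one has $I(\mu(A))\leq \mu^+(A)$; specializing to $A=\{f>s\}$ gives
$$I(\mu(\{f>s\})) \;\leq\; \mu^+(\{f>s\}), \qquad s\in\mathbb{R}.$$
Integrating this pointwise estimate in $s$ and inserting the co-area identity from the first step then yields
$$\int_{-\infty}^{\infty} I(\mu(\{f>s\}))\,ds \;\leq\; \int_{-\infty}^{\infty}\mu^+(\{f>s\})\,ds \;=\; \int_{\Omega}\left\vert\nabla f\right\vert\,d\mu,$$
which is the desired inequality. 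Measurability of the integrand is automatic: $s\mapsto \mu(\{f>s\})$ is monotone (hence Borel) and $I$ is continuous by hypothesis.

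The main point to watch, more bookkeeping than genuine obstacle, is the behaviour at values of $s$ for which $\{f>s\}=\Omega$, where $\mu(\{f>s\})$ sits at the right endpoint of the domain of $I$. This is handled by the natural convention $I(\mu(\Omega))=0$, forced by $\mu^+(\Omega)=0$ (since $\Omega_h=\Omega$ for every $h>0$). In the $Lip_0$/infinite-measure case one may further reduce to $f\geq 0$ by splitting $f=f_+-f_-$: for negative $s$ the superlevel set of each piece is either all of $\Omega$ (with zero perimeter) or empty, so the integrand vanishes, and all remaining values of $s$ have $\mu(\{f_\pm>s\})$ squarely inside the domain of $I$, where no special interpretation is needed.
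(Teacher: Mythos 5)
Your proof is correct and matches the paper's approach exactly: the paper does not actually write out a proof of this theorem, but recalls it as a ``consequence of the co-area formula and the definition of isoperimetric profile,'' citing \cite{bobhou} and \cite{mamiadv} --- precisely your two ingredients (co-area plus $I(\mu(A))\le\mu^{+}(A)$ applied to $A=\{f>s\}$). The only quibble is that in a general metric measure space the co-area formula relating the Minkowski content to the pointwise Lipschitz constant is known only as an inequality, $\int_{-\infty}^{\infty}\mu^{+}(\{f>s\})\,ds\le\int_{\Omega}\left\vert\nabla f\right\vert\,d\mu$, rather than the equality you assert; since this is exactly the direction you use, the argument is unaffected.
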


\begin{proof}
(\textbf{of Theorem \ref{ivan})} Let $f\in Lip(\Omega),$ and let
$A\subset\Omega$ be a measurable set. We compute
\begin{align}
\int_{{A}}\left\vert f-m(f)\right\vert d\mu &  =\int_{{A\cap}\left\{  f\geq
m(f)\right\}  }\left(  f-m(f)\right)  d\mu+\int_{{A\cap}\left\{
f<m(f)\right\}  }\left(  m(f)-f\right)  d\mu\nonumber\\
&  =\int_{{0}}^{\infty}\mu(\{f-m(f)>s\}\cap A\})ds+\int_{{0}}^{\infty}%
\mu(\{m(f)-f\geq s\}\cap A\})ds\nonumber\\
&  =\int_{m(f)}^{\infty}\mu(\left\{  f>s\right\}  \cap A\})ds+\int_{-\infty
}^{m(f)}\mu(\left\{  f\leq s\right\}  \cap A\})ds.\nonumber\\
&  =(I)+(II). \label{uno}%
\end{align}
We estimate $(I)$. Suppose that $s>m(f).$ We claim that%
\begin{equation}
\mu(\{f>s\}\cap A\})\leq\min\{\mu(A),\mu(\Omega)/2\}. \label{adicional}%
\end{equation}
It is plain that (\ref{adicional}) will follow if can show that $\mu
(\{f>s\})\leq\mu(\Omega)/2.$ Suppose, to the contrary, that $\mu
(\{f>s\})>\mu(\Omega)/2.$ Then, since $\{f>s\}$ and $\{f\leq m(f)\}$ are
disjoint sets, $\mu(\{f>s\})+\mu(\{f\leq m(f)\})\leq\mu(\Omega).$ It follows
that $\mu(\{f\leq m(f)\})<\mu(\Omega)/2,$ which is impossible since $m(f)$ is
a median of $f.$

From (\ref{adicional}), and the fact that $t/I(t)$ increases, we get%
\begin{equation}
\mu(\{f>s\}\cap A\})\leq I(\mu(\{f>s\}\cap A\}))\frac{\min\{\mu(A),\mu
(\Omega)/2\}}{I(\min\{\mu(A),\mu(\Omega)/2\})}. \label{adicional1}%
\end{equation}
Since $I$ is increasing on $\left(  0,\mu(\Omega)/2\right]  ,$ and $\mu\left(
\{f>s\}\right)  \leq\mu(\Omega)/2$, we see that
\[
I(\mu(\{f>s\}\cap A\}))\leq I(\mu(\{f>s\})).
\]
Updating (\ref{adicional1}) we have%
\[
\mu(\{f>s\}\cap A\})\leq I(\mu(\{f>s\}))\frac{\min\{\mu(A),\mu(\Omega
)/2\}}{I(\min\{\mu(A),\mu(\Omega)/2\})}.
\]
Integrating we obtain%
\begin{equation}
(I)\leq\frac{\min\{\mu(A),\mu(\Omega)/2\}}{I(\min\{\mu(A),\mu(\Omega
)/2\})}\int_{m(f)}^{\infty}I(\mu(\{f>s\}))ds. \label{uno1}%
\end{equation}
In a similar way we can estimate $(II)$%
\begin{equation}
(II)\leq\frac{\min\{\mu(A),\mu(\Omega)/2\}}{I(\min\{\mu(A),\mu(\Omega
)/2\})}\int_{-\infty}^{m(f)}I(\mu(\left\{  f\leq s\right\}  ))ds. \label{uno2}%
\end{equation}
Inserting the estimates (\ref{uno1}) and (\ref{uno2}) in (\ref{uno}) we obtain%
\begin{equation}
\int_{{A}}\left\vert f-m(f)\right\vert d\mu\leq\frac{\min\{\mu(A),\mu
(\Omega)/2\}}{I(\min\{\mu(A),\mu(\Omega)/2\})}\left(  \int_{m(f)}^{\infty
}I(\mu(\{f>s\}))ds+\int_{-\infty}^{m(f)}I(\mu(\left\{  f\leq s\right\}
))ds\right)  . \label{update}%
\end{equation}
We now show that the integrals inside the parenthesis can be combined. Indeed,
when $s<m(f)$ we have $\mu(\left\{  f\leq s\right\}  )<\mu(\Omega)/2,$
therefore, by the symmetry of $I$ around the point $\mu(\Omega)/2,$ we find
that
\[
I(\mu(\left\{  f\leq s\right\}  ))=I(\mu(\Omega)-\mu(\left\{  f\leq s\right\}
))=I(\mu(\Omega\diagdown\left\{  f\leq s\right\}  ))=I(\mu(\left\{
f>s\right\}  )).
\]
Whence,%
\[
\int_{-\infty}^{m(f)}I(\mu(\left\{  f\leq s\right\}  ))ds=\int_{-\infty
}^{m(f)}I(\mu(\left\{  f>s\right\}  ))ds.
\]
Inserting the last equality in (\ref{update}) yields%
\begin{align*}
\int_{{A}}\left\vert f-m(f)\right\vert d\mu &  \leq\frac{\min\{\mu
(A),\mu(\Omega)/2\}}{I(\min\{\mu(A),\mu(\Omega)/2\})}\int_{-\infty}^{\infty
}I(\mu(\{f>s\}))ds\\
&  \leq\frac{\min\{\mu(A),\mu(\Omega)/2\}}{I(\min\{\mu(A),\mu(\Omega
)/2\})}\int_{{\Omega}}\left\vert \nabla f\right\vert d\mu\text{ \ (by
(\ref{ledoux}))},
\end{align*}
as we wished to show.
\end{proof}

\begin{remark}
\label{poincar}Note that (\ref{poinlocal}) reduces to (\ref{pr1}) when
$A=\Omega.$ Moreover, since%
\[
\frac{1}{2}\int_{{\Omega}}\left\vert f-\frac{1}{\mu({\Omega})}\int_{\Omega
}fd\mu\right\vert d\mu\leq\int_{{\Omega}}\left\vert f-m(f)\right\vert d\mu,
\]
we also get
\[
\int_{{\Omega}}\left\vert f-\frac{1}{\mu({\Omega})}\int_{\Omega}%
fd\mu\right\vert d\mu\leq\frac{\mu({\Omega})}{I(\mu({\Omega})/2)}\int%
_{{\Omega}}\left\vert \nabla f\right\vert d\mu.
\]

\end{remark}

\section{$L^{1}$ Uncertainty Inequalities via local Poincar\'{e} inequalities}

Let $(\Omega,\mu,d)$ be a metric measure space. In this section we prove our
main result concerning $L^{1}$ uncertainty inequalities.

\begin{theorem}
\label{Teounl1}Let $w$ be an isoperimetric weight, and let $\alpha>0.$ Suppose
that $\mu(\Omega)<\infty$ (resp. $\mu(\Omega)=\infty$), then, for all $f\in
Lip(\Omega),$ with $m(f)=0$ or $\int_{\Omega}fd\mu=0$ (resp. for all $f\in
Lip_{0}(\Omega))$, we have
\begin{equation}
\left\Vert f\right\Vert _{1}\leq2C(w)r\left\Vert \left\vert \nabla
f\right\vert \right\Vert _{1}+2r^{-\alpha}\left\Vert w^{\alpha}f\right\Vert
_{1},\text{ for all }r>0. \label{unl1}%
\end{equation}

\end{theorem}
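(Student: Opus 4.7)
The plan is to split $\Omega = \{w\le r\}\cup\{w>r\}$ and estimate $\int|f|\,d\mu$ on each piece separately. On $\{w>r\}$ the pointwise inequality $1\le (w/r)^{\alpha}$ gives immediately
$$\int_{\{w>r\}}|f|\,d\mu\le r^{-\alpha}\|w^{\alpha}f\|_{1}.$$
The real work is to estimate $\int_{A}|f|\,d\mu$ with $A:=\{w\le r\}$, and here the defining condition of an isoperimetric weight, $\Phi(\mu(A))\le C(w)r$, is designed exactly so as to be fed into Theorem \ref{ivan}. Splitting this integral in different ways according to the normalization on $f$ (and according to whether $\mu(\Omega)$ is finite or not) will produce the claimed bound, with the factor $2$ in (\ref{unl1}) absorbing an extra contribution that arises only in the mean-zero case.

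When $\mu(\Omega)<\infty$ and $m(f)=0$, I just apply Theorem \ref{ivan} to $A$: since $|f|=|f-m(f)|$, one gets $\int_{A}|f|\,d\mu\le\Phi(\mu(A))\|\nabla f\|_{1}\le C(w)r\,\|\nabla f\|_{1}$, and (\ref{unl1}) follows with even better constants. When instead $\int_{\Omega}f\,d\mu=0$, I insert and subtract the median: $\int_{A}|f|\,d\mu\le\int_{A}|f-m(f)|\,d\mu+|m(f)|\mu(A)$. Theorem \ref{ivan} takes care of the first summand as before. For the second, the mean-zero assumption yields $|m(f)|\mu(\Omega)\le\|f-m(f)\|_{1}$, so by the global Poincar\'e inequality (\ref{pr1}) we have $|m(f)|\le\tfrac{1}{2I(\mu(\Omega)/2)}\|\nabla f\|_{1}$. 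A short dichotomy on whether $\mu(A)\le\mu(\Omega)/2$ or not shows that $\mu(A)/I(\mu(\Omega)/2)\le 2C(w)r$ in both subcases (using $I$ increasing on $(0,\mu(\Omega)/2]$ and $\Phi(\mu(\Omega)/2)\le C(w)r$ respectively), and so $|m(f)|\mu(A)\le C(w)r\,\|\nabla f\|_{1}$. Adding the two contributions gives $\int_{A}|f|\,d\mu\le 2C(w)r\,\|\nabla f\|_{1}$, which together with the $\{w>r\}$ bound yields (\ref{unl1}).

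When $\mu(\Omega)=\infty$ and $f\in Lip_{0}(\Omega)$ there is no median to worry about, and I use the layer cake representation $\int_{A}|f|\,d\mu=\int_{0}^{\infty}\mu(A\cap A_{s})\,ds$ with $A_{s}:=\{|f|>s\}$. Because $I$ is concave with $I(0)=0$, the function $t\mapsto t/I(t)$ is non-decreasing; combined with $A\cap A_{s}\subset A$ this gives $\mu(A\cap A_{s})/I(\mu(A\cap A_{s}))\le\mu(A)/I(\mu(A))\le C(w)r$, i.e. $\mu(A\cap A_{s})\le C(w)r\,I(\mu(A\cap A_{s}))$. Since $A\cap A_{s}\subset A_{s}$ and $I$ is increasing in this regime, $I(\mu(A\cap A_{s}))\le I(\mu(A_{s}))$. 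Integrating in $s$ and invoking (\ref{ledoux}) applied to $|f|$ (which is Lip with $|\nabla|f||\le|\nabla f|$) yields $\int_{A}|f|\,d\mu\le C(w)r\,\|\nabla f\|_{1}$, and (\ref{unl1}) again follows.

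The step I expect to be the main obstacle is the mean-zero normalization in the finite measure case: the median term $|m(f)|\mu(A)$ does not appear at first sight to be comparable to $C(w)r\|\nabla f\|_{1}$, and it is only after feeding the global Poincar\'e inequality into the estimate $|m(f)|\mu(\Omega)\le\|f-m(f)\|_{1}$ and carrying out the $\mu(A)$ versus $\mu(\Omega)/2$ dichotomy that one recognises it is bounded by $C(w)r\|\nabla f\|_{1}$, at the cost of the extra factor of $2$ in (\ref{unl1}). Once this point is clarified, the three cases assemble into a single uniform statement.
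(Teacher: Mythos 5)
Your proof is correct, and its overall architecture coincides with the paper's: split $\Omega$ into $\{w\le r\}$ and $\{w>r\}$, bound the second piece pointwise by $(w/r)^{\alpha}$, feed the isoperimetric-weight condition into the localized Poincar\'e inequality of Theorem \ref{ivan} on the first piece, and in the infinite-measure case run the layer-cake/coarea argument through (\ref{ledoux}) exactly as the paper does. The one place where you diverge is the treatment of the median term $|m(f)|\mu(\{w\le r\})$ in the mean-zero, finite-measure case. The paper disposes of it by the cancellation $\int_{\{w\le r\}}f=-\int_{\{w>r\}}f$, which folds that term back into the two pieces already estimated (hence the factor $2$ on both terms of (\ref{unl1})). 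You instead bound $|m(f)|\le \frac{1}{2I(\mu(\Omega)/2)}\Vert\,|\nabla f|\,\Vert_{1}$ via the global Poincar\'e inequality (\ref{pr1}) and then check, by the dichotomy $\mu(\{w\le r\})\lessgtr\mu(\Omega)/2$, that $\mu(\{w\le r\})/I(\mu(\Omega)/2)\le 2C(w)r$; both subcases work out as you claim, and this route actually yields the marginally sharper bound $\Vert f\Vert_{1}\le 2C(w)r\Vert\,|\nabla f|\,\Vert_{1}+r^{-\alpha}\Vert w^{\alpha}f\Vert_{1}$. The two treatments are of comparable length; the paper's has the small advantage of reusing only the local inequality and the already-estimated terms, while yours makes transparent that the extra factor of $2$ is purely the cost of controlling the median.
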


\begin{proof}
\textbf{Case of finite measure.}

Suppose that $f\in Lip(\Omega).$ We consider each normalization separately.

(i) Suppose $m(f)=0.$ Then, for all $r>0,$ we have%
\begin{align*}
\int_{\Omega}\left\vert f\right\vert d\mu &  =\int_{\{w\leq r\}}\left\vert
f\right\vert d\mu+\int_{\{r<w\}}\left\vert f\right\vert d\mu\\
&  =\int_{\{w\leq r\}}\left\vert f-m(f)\right\vert d\mu+\int_{\{w>r\}}%
\left\vert f\right\vert d\mu\\
&  \leq\frac{\min\{\mu(\{w\leq r\}),\mu(\Omega)/2\}}{I(\min\{\mu(\{w\leq
r\}),\mu(\Omega)/2\})}\int_{{\Omega}}\left\vert \nabla f\right\vert d\mu
+\int_{\{w>r\}}\left\vert f\right\vert d\mu\text{ \ (by (\ref{poinlocal}))}\\
&  \leq C(w)r\int_{{\Omega}}\left\vert \nabla f\right\vert d\mu+\int%
_{\{w>r\}}\left\vert f\right\vert d\mu\text{ (since }w\text{ is an
isoperimetric weight)}\\
&  =C(w)r\int_{{\Omega}}\left\vert \nabla f\right\vert d\mu+\int%
_{\{w>r\}}\left(  \frac{w}{w}\right)  ^{\alpha}\left\vert f\right\vert
d\mu\text{ }\\
&  \leq C(w)r\int_{{\Omega}}\left\vert \nabla f\right\vert d\mu+r^{-\alpha
}\int_{\Omega}w^{\alpha}\left\vert f\right\vert d\mu,\text{ }%
\end{align*}
as desired.

(ii) Suppose that $\int_{\Omega}fd\mu=0.$ Let $r>0,$ and write%
\begin{align*}
\int_{\Omega}\left\vert f\right\vert d\mu &  =\int_{\{w\leq r\}}\left\vert
f\right\vert d\mu+\int_{\{w>r\}}\left\vert f\right\vert d\mu\\
&  \leq\int_{\{w\leq r\}}\left\vert f-m(f)\right\vert d\mu+\left\vert
m(f)\right\vert \mu(\{w\leq r\})+\int_{\{w>r\}}\left\vert f\right\vert d\mu\\
&  \leq\int_{\{w\leq r\}}\left\vert f-m(f)\right\vert d\mu+\left\vert
\int_{\{w\leq r\}}m(f)d\mu-\int_{\{w\leq r\}}fd\mu-\int_{\{w>r\}}%
fd\mu\right\vert +\int_{\{w>r\}}\left\vert f\right\vert d\mu\\
&  \leq2\int_{\{w\leq r\}}\left\vert f-m(f)\right\vert d\mu+2\int_{\{w\leq
r\}}\left\vert f\right\vert d\mu\\
&  =2A(r)+2C(r).
\end{align*}

Since the terms $A(r)$ and $C(r)$ were estimated in the proof of $(i)$ above,
it follows that
\[
\int_{\Omega}\left\vert f\right\vert d\mu\leq2C(w)r\int_{\Omega}\left\vert
\nabla f\right\vert d\mu+2r^{-\alpha}\int_{\Omega}w^{\alpha}\left\vert
f\right\vert d\mu
\]
as we wished to show.

\textbf{Case of infinite measure. }$f\in Lip_{0}(\Omega).$ For all $r>0$ we
write%
\begin{align}
\left\Vert f\right\Vert _{1}  &  =\int_{\{w\leq r\}}\left\vert f\right\vert
d\mu+\int_{\{w>r\}}\left\vert f\right\vert d\mu\nonumber\\
&  =(I)+(II). \label{inserta}%
\end{align}
The term $(II)$ can be estimated exactly as in the previous case. It remains
to estimate%
\[
(I)=\int_{{0}}^{\infty}\mu(\{\left\vert f\right\vert >s\}\cap\{w\leq r\})ds.
\]
The integrand can be estimated as follows,%
\begin{align*}
\mu(\{\left\vert f\right\vert  &  >s\}\cap\{w\leq r\})\leq I(\mu(\{\left\vert
f\right\vert >s\}\cap\{w\leq r\}))\frac{\mu(\{w\leq r\})}{I(\mu(\{w\leq
r\}))}\text{ (since }\frac{s}{I(s)}\text{ increases)}\\
&  \leq C(w)rI(\mu(\{\left\vert f\right\vert >s\}\cap\{w\leq r\}))\text{
(since }w\text{ is an isoperimetric weight)}\\
&  \leq C(w)rI(\mu(\{\left\vert f\right\vert >s\}))\text{ (since }I\text{
increases).}%
\end{align*}
Consequently,%
\begin{align*}
\int_{\{w\leq r\}}\left\vert f\right\vert d\mu &  \leq C(w)r\int_{{0}}%
^{\infty}I(\mu(\{\left\vert f\right\vert >s\}))ds\text{ }\\
&  \leq C(w)r\int_{{\Omega}}\left\vert \nabla\left\vert f\right\vert
\right\vert d\mu\text{ \ (by (\ref{ledoux}))}\\
&  \leq C(w)r\int_{{\Omega}}\left\vert \nabla f\right\vert d\mu.
\end{align*}

Inserting the estimates that we have obtained for $(I)$ and $(II)$ into
(\ref{inserta}) gives the desired result.
\end{proof}

\begin{remark}
\label{reml1}Selecting the value $r=\left(  \frac{\left\Vert w^{\alpha
}f\right\Vert _{1}}{2C(w)\left\Vert \left\vert \nabla f\right\vert \right\Vert
_{1}}\right)  ^{\frac{1}{1+\alpha}}$ to compute (\ref{unl1}) balances the two
terms and we obtain the multiplicative inequality%
\[
\left\Vert f\right\Vert _{1}\leq\left(  2C(w)\right)  ^{\frac{\alpha}%
{^{\alpha+1}}}\left\Vert \left\vert \nabla f\right\vert \right\Vert
_{1}^{\frac{\alpha}{\alpha+1}}\left\Vert w^{\alpha}f\right\Vert _{1}^{\frac
{1}{\alpha+1}},
\]
for all $f\in Lip(\Omega)$ such that $m(f)=0,$ or $\int_{\Omega}fd\mu=0$ if
$\mu(\Omega)<\infty$ (or for all $f\in Lip_{0}(\Omega),$ if $\mu
(\Omega)=\infty).$
\end{remark}

Following closely the chain rule argument used in \cite[Section 6.5]{daltre}
we now show that Theorem \ref{Teounl1} implies the corresponding $L^{p}$
version of itself. More precisely, we have

\begin{theorem}
\label{teounclp}Let $w$ be an isoperimetric weight, let $p>1,$ and let
$\alpha>0.$ Suppose that $\mu(\Omega)<\infty$ (resp. $\mu(\Omega)=\infty$),
then, there is a constant $D=D(w,p)$ such that, for all $f\in Lip(\Omega)$
with $m(f)=0,$ or $\int_{\Omega}fd\mu=0$ (resp. for all $f\in Lip_{0}%
(\Omega))$, we have
\[
\left\Vert f\right\Vert _{p}\leq Dr\left\Vert \left\vert \nabla f\right\vert
\right\Vert _{p}+2r^{-\alpha}\left\Vert w^{\alpha}\left\vert f\right\vert
\right\Vert _{p},\text{ \ \ \ }r>0.
\]

\end{theorem}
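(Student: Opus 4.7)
The plan is to derive the $L^p$ estimate from the $L^1$ estimate of Theorem~\ref{Teounl1} by the classical power-substitution / chain-rule trick, as advertised in the paragraph above the theorem. Given $f$ as in the hypotheses, set $g := |f|^{p-1}f$. Since $\phi(t) := |t|^{p-1}t$ is strictly increasing and $C^1$ off $0$ with $\phi'(t) = p|t|^{p-1}$, the composition $g = \phi \circ f$ is locally Lipschitz and the chain rule for Lipschitz functions gives $|\nabla g| \le p\,|f|^{p-1}|\nabla f|$ a.e. In particular, $g \in Lip_0(\Omega)$ whenever $f \in Lip_0(\Omega)$, and, because $\phi$ is monotone, $m(g) = 0$ whenever $m(f) = 0$. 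So $g$ is admissible for Theorem~\ref{Teounl1}.

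Applying Theorem~\ref{Teounl1} (with the same $\alpha$) to $g$, and observing $\|g\|_1 = \|f\|_p^p$ and $\|w^\alpha g\|_1 = \int_\Omega w^\alpha |f|^p\,d\mu$, yields for every $r>0$
\[
\|f\|_p^p \;\le\; 2pC(w)\,r \int_\Omega |f|^{p-1}|\nabla f|\,d\mu \;+\; 2r^{-\alpha}\int_\Omega w^\alpha |f|^p\,d\mu.
\]
Now apply Hölder's inequality with conjugate exponents $p$ and $p' = p/(p-1)$ to each integral on the right:
\[
\int_\Omega |f|^{p-1}|\nabla f|\,d\mu \le \||\nabla f|\|_p\,\|f\|_p^{p-1}, \qquad \int_\Omega w^\alpha |f|^p\,d\mu \le \|w^\alpha f\|_p\,\|f\|_p^{p-1}.
\]
Dividing through by $\|f\|_p^{p-1}$ (the inequality is trivial when $\|f\|_p = 0$) produces the stated bound with the explicit constant $D := 2pC(w)$.

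The one subtle point is the normalization. For $\mu(\Omega) = \infty$ the compact support of $f$ passes automatically to $g$, and for $\mu(\Omega) < \infty$ with $m(f) = 0$ the strict monotonicity of $\phi$ gives $m(g) = 0$, so both of these cases are handled cleanly. The remaining case, $\mu(\Omega) < \infty$ with $\int_\Omega f\,d\mu = 0$, is the only real piece of bookkeeping, since $\int_\Omega g\,d\mu$ need not vanish; I would handle it by applying Theorem~\ref{Teounl1} to $g - c$, where $c := \mu(\Omega)^{-1}\int_\Omega g\,d\mu$ (the gradient is unaffected by a constant shift), and then using the median-type bound~(\ref{lestima}) on $g$ to absorb $|c|\mu(\Omega)$ and the corresponding error in the weighted term back into the left-hand side. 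Beyond that, the argument is essentially pure chain rule plus two applications of Hölder.
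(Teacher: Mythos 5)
Your treatment of the infinite-measure case and of the finite-measure case with $m(f)=0$ is essentially the paper's argument (chain rule applied to $f|f|^{p-1}$, then H\"older twice, then divide by $\left\Vert f\right\Vert _{p}^{p-1}$), and those parts are fine; the paper uses the slightly weaker metric chain rule $\left\vert \nabla (f|f|^{p-1})\right\vert \leq 2p|f|^{p-1}\left\vert \nabla f\right\vert$, which only changes the constant $D$. The genuine gap is the remaining case $\mu(\Omega)<\infty$, $\int_{\Omega}fd\mu=0$. Your plan is to apply Theorem \ref{Teounl1} to $g-c$ with $c=\mu(\Omega)^{-1}\int_{\Omega}g\,d\mu$ and then ``absorb $|c|\mu(\Omega)$ back into the left-hand side.'' This cannot work as stated: $|c|\mu(\Omega)=\left\vert \int_{\Omega}g\,d\mu\right\vert \leq\left\Vert g\right\Vert _{1}$ with constant exactly $1$, so the triangle inequality $\left\Vert g\right\Vert _{1}\leq\left\Vert g-c\right\Vert _{1}+|c|\mu(\Omega)$ followed by any attempt to move $|c|\mu(\Omega)$ to the left yields nothing. (Estimate (\ref{lestima}) bounds a median by $\frac{2}{\mu(\Omega)}\left\Vert g\right\Vert _{1}$, which is no better for absorption.) Moreover, the weighted term produced by your substitution is $\left\Vert w^{\alpha}(g-c)\right\Vert _{1}\leq\left\Vert w^{\alpha}g\right\Vert _{1}+|c|\int_{\Omega}w^{\alpha}d\mu$, and $\int_{\Omega}w^{\alpha}d\mu$ is neither assumed finite nor controlled by $\left\Vert w^{\alpha}|f|\right\Vert _{p}$: isoperimetric weights carry no integrability hypothesis.

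The paper avoids both problems by never subtracting a global constant. It splits $\int_{\Omega}|f|^{p}d\mu$ over $\{w\leq r\}$ and $\{w>r\}$, uses that $m(f)|m(f)|^{p-1}$ is a median of $f|f|^{p-1}$ so that the local Poincar\'e inequality (Theorem \ref{ivan}) handles $A(r)=\int_{\{w\leq r\}}\left\vert f|f|^{p-1}-m(f)|m(f)|^{p-1}\right\vert d\mu$, and then treats the constant term $B(r)=\left\vert \int_{\{w\leq r\}}m(f)|m(f)|^{p-1}d\mu\right\vert$ by writing $\int_{\{w\leq r\}}m(f)d\mu=\int_{\{w\leq r\}}(m(f)-f)d\mu-\int_{\{w>r\}}fd\mu$ (this is exactly where $\int_{\Omega}fd\mu=0$ enters), and by bounding $|m(f)|^{p-1}$ via (\ref{lestima}) applied to $f|f|^{p-2}$ together with H\"older. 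This yields $B(r)\leq 2C(w)r\left\Vert \nabla f\right\Vert _{p}\left\Vert f\right\Vert _{p}^{p-1}+2\left\Vert f\right\Vert _{p}^{p-1}r^{-\alpha}\left\Vert w^{\alpha}f\right\Vert _{p}$: every error term carries the factor $\left\Vert f\right\Vert _{p}^{p-1}$ and involves only $w^{\alpha}f$, never $w^{\alpha}$ alone. You need some version of this bookkeeping; the constant-shift trick does not close.
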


\begin{proof}
The main technical difficulty to prove the Theorem is that, as we attempt to
apply Theorem \ref{Teounl1} by replacing $f$ with powers of $f,$ we may loose
the required normalizations in the process. Let us first record the following
well known elementary version of the chain rule for power functions, which is
valid in the metric setting,
\[
\left\vert \nabla\left\vert f\right\vert ^{p}(x)\right\vert \leq2p\left\vert
f(x)\right\vert ^{p-1}\left\vert \nabla\left\vert f\right\vert (x)\right\vert
\leq2p\left\vert f(x)\right\vert ^{p-1}\left\vert \nabla f(x)\right\vert
,\ f\in Lip(\Omega),
\]
and%
\[
\left\vert \nabla\left(  f\left\vert f\right\vert ^{p-1}\right)
(x)\right\vert \leq2p\left\vert f(x)\right\vert ^{p-1}\left\vert \nabla
f(x)\right\vert ,\ f\in Lip(\Omega).
\]
Applying H\"{o}lder's inequality we find%
\begin{equation}
\left\Vert \left\vert \nabla\left\vert f\right\vert ^{p}\right\vert
\right\Vert _{1}\leq2p\left\Vert \left\vert \nabla f\right\vert \left\vert
f\right\vert ^{p-1}\right\Vert _{1}\leq2p\left\Vert \left\vert \nabla
f\right\vert \right\Vert _{p}\left\Vert f\right\Vert _{p}^{p-1}, \label{orla}%
\end{equation}
and
\begin{equation}
\left\Vert \nabla\left(  f\left\vert f\right\vert ^{p-1}\right)  \right\Vert
_{1}\leq2p\left\Vert \left\vert \nabla f\right\vert \right\Vert _{p}\left\Vert
f\right\Vert _{p}^{p-1}. \label{orla0}%
\end{equation}
Let us also note here that H\"{o}lder's inequality gives us,%
\begin{equation}
\left\Vert w^{\alpha}\left\vert f\right\vert ^{p}\right\Vert _{1}=\left\Vert
w^{\alpha}\left\vert f\right\vert \left\vert f\right\vert ^{p-1}\right\Vert
_{1}\leq\left\Vert w^{\alpha}\left\vert f\right\vert \right\Vert
_{p}\left\Vert f\right\Vert _{p}^{p-1}. \label{orla1}%
\end{equation}
We now divide the proof in several cases. The easiest case to deal with the
normalizations issue is when $\mu(\Omega)=\infty.$ Indeed, if $f\in
Lip_{0}(\Omega),$ we also have $\left\vert f\right\vert ^{p}\in Lip_{0}%
(\Omega).$ Thus, by (\ref{unl1}) we have,%
\[
\left\Vert \left\vert f\right\vert ^{p}\right\Vert _{1}\leq2C(w)r\left\Vert
\left\vert \nabla\left\vert f\right\vert ^{p}\right\vert \right\Vert
_{1}+r^{-\alpha}\left\Vert w^{\alpha}\left\vert f\right\vert ^{p}\right\Vert
_{1},\text{ for all }r>0.
\]
Then, from (\ref{orla}) and (\ref{orla1}), we get,%
\[
\left\Vert f\right\Vert _{p}^{p}\leq4C(w)p\left\Vert \left\vert \nabla
f\right\vert \right\Vert _{p}\left\Vert f\right\Vert _{p}^{p-1}+\left\Vert
w^{\alpha}\left\vert f\right\vert \right\Vert _{p}\left\Vert f\right\Vert
_{p}^{p-1},\text{ for all }r>0,
\]
and the desired result follows.

Suppose that $\mu(\Omega)<\infty.$ Let $f\in Lip\left(  \Omega\right)  $. It
is not difficult to verify that $m(f)\left\vert m(f)\right\vert ^{p-1}$ is a
median of $f\left\vert f\right\vert ^{p-1}$ (cf. \cite{daltre}). To take
advantage of this fact we estimate $\left\Vert \left\vert f\right\vert
^{p}\right\Vert _{1}$ as follows%
\begin{align*}
\int_{\Omega}\left\vert f\right\vert ^{p}d\mu &  =\int_{\{w\leq r\}}\left\vert
f\right\vert ^{p}d\mu+\int_{\{r<w\}}\left\vert f\right\vert ^{p}d\mu\\
&  \leq\int_{\{w\leq r\}}\left\vert f\left\vert f\right\vert ^{p-1}%
-m(f)\left\vert m(f)\right\vert ^{p-1}\right\vert d\mu+\left\vert
\int_{\{w\leq r\}}m(f)\left\vert m(f)\right\vert ^{p-1}d\mu\right\vert \\
&  +\int_{\{w>r\}}\left\vert f\right\vert ^{p}d\mu\\
&  =A(r)+B(r)+C(r).
\end{align*}
$A(r)$ can be estimated using the local Poincar\'{e} inequality, the fact that
$w$ is an isoperimetric weight and (\ref{orla}):%
\begin{align*}
A(r)  &  \leq C(w)r\left\Vert \left\vert \nabla\left(  f\left\vert
f\right\vert ^{p-1}\right)  \right\vert \right\Vert _{1}\\
&  \leq2pC(w)r\left\Vert \left\vert \nabla f\right\vert \right\Vert
_{p}\left\Vert f\right\Vert _{p}^{p-1}\text{ (by (\ref{orla0}))}.
\end{align*}
The term $C(r)$ can be readily estimated in a familiar fashion:%
\begin{align*}
C(r)  &  \leq r^{-\alpha}\left\Vert w^{\alpha}\left\vert f\right\vert
^{p}\right\Vert _{1}\\
&  \leq\left\Vert w^{\alpha}\left\vert f\right\vert \right\Vert _{p}\left\Vert
f\right\Vert _{p}^{p-1}\text{ (by (\ref{orla1})).}%
\end{align*}

It remains to estimate $B(r),$ and for this purpose we shall consider two
cases. If $m(f)=0,$ then $B(r)=0$ and we are done. Suppose now that
$\int_{\Omega}fd\mu=0.$ Then we can write%
\begin{align*}
B(r)  &  =\left\vert \int_{\{w\leq r\}}m(f)d\mu-\int_{\{w\leq r\}}fd\mu
-\int_{\{w>r\}}fd\mu\right\vert \left\vert m(f)\right\vert ^{p-1}\\
&  \leq\left\vert m(f)\right\vert ^{p-1}\int_{\{w\leq r\}}\left\vert
m(f)-f\right\vert d\mu+\left\vert m(f)\right\vert ^{p-1}\int_{\{w>r\}}%
\left\vert f\right\vert d\mu\\
&  =B_{1}(r)+B_{2}(r).
\end{align*}
To estimate $B_{1}(r)$ we consider each of its factors separately. Using the
local Poincar\'{e} inequality, the definition of isoperimetric weight and
H\"{o}lder's inequality we see that%
\begin{align*}
\int_{\{w\leq r\}}\left\vert m(f)-f\right\vert d\mu &  \leq c(w)r\int%
_{{\Omega}}\left\vert \nabla f\right\vert d\mu\\
&  \leq c(w)r\left(  \int_{{\Omega}}\left\vert \nabla f\right\vert ^{p}%
d\mu\right)  ^{1/p}\mu(\Omega)^{1/p^{\prime}}.
\end{align*}
To estimate $\left\vert m(f)\right\vert ^{p-1}$ we observe that
$m(f)\left\vert m(f)\right\vert ^{p-2}$ is a median of $f\left\vert
f\right\vert ^{p-2}.$ Therefore, by (\ref{lestima}) and H\"{o}lder's
inequality,%
\begin{align*}
\left\vert m(f)\right\vert ^{p-1}  &  =\left\vert m(f)\left\vert
m(f)\right\vert ^{p-2}\right\vert \\
&  \leq\frac{2}{\mu(\Omega)}\int_{{\Omega}}\left\vert f\left\vert f\right\vert
^{p-2}\right\vert d\mu\\
&  =\frac{2}{\mu(\Omega)}\int_{{\Omega}}\left\vert f\right\vert ^{p-1}d\mu\\
&  \leq\frac{2}{\mu(\Omega)}\left(  \int_{{\Omega}}\left\vert f\right\vert
^{p}d\mu\right)  ^{1/p^{\prime}}\mu({\Omega)}^{1/p}.
\end{align*}
It follows that%
\begin{align*}
B_{1}(r)  &  \leq C(w)r\left(  \int_{{\Omega}}\left\vert \nabla f\right\vert
^{p}d\mu\right)  ^{1/p}\mu(\Omega)^{1/p^{\prime}}\frac{2}{\mu(\Omega)}\left(
\int_{{\Omega}}\left\vert f\right\vert ^{p}d\mu\right)  ^{1/p^{\prime}}%
\mu({\Omega)}^{1/p}\\
&  =2C(w)r\left\Vert \nabla f\right\Vert _{p}\left\Vert f\right\Vert
_{p}^{p-1}.
\end{align*}
Similarly, using the estimate for $\left\vert m(f)\right\vert ^{p-1}$ obtained
above and H\"{o}lder's inequality, we see that
\begin{align*}
B_{2}(r)  &  =\left\vert m(f)\right\vert ^{p-1}\int_{\{w>r\}}\left\vert
f\right\vert d\mu\\
&  \leq\frac{2}{\mu(\Omega)}\left(  \int_{{\Omega}}\left\vert f\right\vert
^{p}d\mu\right)  ^{1/p^{\prime}}\mu({\Omega)}^{1/p}\left(  \int_{\{w>r\}}%
\left\vert f\right\vert ^{p}d\mu\right)  ^{1/p}\mu(\Omega)^{1/p^{\prime}}\\
&  \leq2\left\Vert f\right\Vert _{p}^{p-1}r^{-\alpha}\left\Vert w^{\alpha
}f\right\Vert _{p}.
\end{align*}
Therefore,%
\[
B(r)\leq2C(w)r\left\Vert \nabla f\right\Vert _{p}\left\Vert f\right\Vert
_{p}^{p-1}+2\left\Vert f\right\Vert _{p}^{p-1}r^{-\alpha}\left\Vert w^{\alpha
}f\right\Vert _{p}.
\]
Combining estimates we thus arrive at%
\[
\left\Vert f\right\Vert _{p}^{p}\leq C(w)(2p+1)r\left\Vert \left\vert \nabla
f\right\vert \right\Vert _{p}\left\Vert f\right\Vert _{p}^{p-1}+2r^{-\alpha
}\left\Vert w^{\alpha}\left\vert f\right\vert \right\Vert _{p}\left\Vert
f\right\Vert _{p}^{p-1},
\]
as desired.
\end{proof}

\begin{remark}
If we select $r=\left(  \frac{\left\Vert w^{\alpha}f\right\Vert _{p}%
}{D\left\Vert \left\vert \nabla f\right\vert \right\Vert _{p}}\right)
^{\frac{1}{1+\alpha}}$ to compute (\ref{unl1}), then we obtain the
multiplicative inequality%
\[
\left\Vert f\right\Vert _{p}\leq D^{\frac{p\alpha p}{^{p\alpha+p}}}\left\Vert
\left\vert \nabla f\right\vert \right\Vert _{p}^{\frac{\alpha}{\alpha+1}%
}\left\Vert w^{\alpha}f\right\Vert _{p}^{\frac{1}{\alpha+1}},
\]
for all $f\in Lip(\Omega)$ such that $m(f)=0$ or $\int_{\Omega}fd\mu=0$ if
$\mu(\Omega)<\infty$ (or for all $f\in Lip_{0}(\Omega)$ if $\mu(\Omega
)=\infty).$
\end{remark}

\section{Isoperimetric weights vs Dall'Ara-Trevisan weights\label{nota3}}

Dall'Ara-Trevisan \cite{daltre} proved versions of Theorems \ref{Teounl1} and
\ref{teounclp}, for homogeneous spaces\footnote{We refer to the Introduction
for the basic assumptions on $M,$ and \cite{daltre} for complete details.}
$M,$ and for weights $w:M\rightarrow\mathbb{R}^{+}$ that satisfy the growth condition%

\begin{equation}
\mu(\{w\leq r\})\leq \Upsilon_{M}(r):=\mu(B(r)). \label{intro2}%
\end{equation}
In this section we compare the weights in the Dall'Ara-Trevisan class with the
corresponding isoperimetric weights defined on $M.$ In preparation for this
task let us introduce some notation and recall useful information.

In this section we shall consider homogeneous spaces $M$ that satisfy the
assumptions of \cite{daltre} and, moreover, are metric measure spaces in the
sense of the present paper\footnote{In particular, we assume that the
isoperimetric profile $I_{(M,\mu,d)}:=I\ $\ satisfies the usual assumptions.}.
We shall simply refer to these spaces as *homogeneous metric measure spaces*.

We now recall the weak isoperimetric inequality used in \cite{daltre} (cf.
also \cite{coulh}).

\begin{theorem}
\label{weakiso}Let $M$ be an homogeneous space satisfying the assumptions of
\cite{daltre}. Then the following statements hold.

\begin{enumerate}
\item Suppose that $M$ is non-compact. Then, there exists $C>0$ such that for
all $r>0,$ and for all Borel sets $A\subset M$ such that $\mu(A)\leq
\Upsilon_{M}(r),$ we have
\begin{equation}
\mu(A)\leq Cr\mu^{+}(A). \label{tr0}%
\end{equation}

\item Suppose that $M$ is compact. Then,

(i) There exists $C>0$ such that for $r>0,$ and all Borel sets $E$ with
$\min\{\mu(E),\mu(E^{c})\}\leq\frac{\Upsilon_{M}(r)}{2},$ we have%
\[
\min\{\mu(E),\mu(E^{c})\}\leq Cr\mu^{+}(E).
\]
(ii) If $\mu(E)\leq \Upsilon_{M}(r),$ then, for all $r>0,$ and for all Borel
sets $A\subset M$ such that $\mu(A)\leq\mu(M)/2,$
\begin{equation}
\mu(A\cap E)\leq Cr\mu^{+}(A). \label{tr00}%
\end{equation}

\end{enumerate}
\end{theorem}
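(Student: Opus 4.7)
The plan is to reduce all three statements of Theorem \ref{weakiso} to a single homogeneous-space estimate
\[
\frac{\Upsilon_M(r)}{I(\Upsilon_M(r))}\leq Cr,\qquad r>0,
\]
which asserts that a ball of radius $r$ has isoperimetric ratio comparable to $r$. This inequality is really the substance of Theorem \ref{weakiso}: the invariance assumptions of \cite{daltre} (together with the standard doubling/coarea considerations in \cite{coulh}) force balls to be, up to a universal constant, near-extremizers of the isoperimetric problem at volume $\Upsilon_M(r)$, with $\mu^{+}(B(r))\leq C\Upsilon_M(r)/r$. Granting this ball estimate, I would deduce everything else from the general isoperimetric bound $\mu^{+}(A)\geq I(\mu(A))$ combined with the standing assumption that $I$ is concave with $I(0)=0$, which makes $t/I(t)$ non-decreasing on the relevant interval.

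For part 1 the deduction is immediate: if $\mu(A)\leq\Upsilon_M(r)$, then
\[
\frac{\mu(A)}{\mu^{+}(A)}\leq\frac{\mu(A)}{I(\mu(A))}\leq\frac{\Upsilon_M(r)}{I(\Upsilon_M(r))}\leq Cr.
\]
For part 2(i) the added ingredient is the symmetry of $I$ around $\mu(M)/2$ together with $\mu^{+}(E)=\mu^{+}(E^{c})$: these force $\mu^{+}(E)\geq I(\min\{\mu(E),\mu(E^{c})\})$, after which the monotonicity argument gives the bound, with $\Upsilon_M(r)/2$ in place of $\Upsilon_M(r)$ (the factor $2$ is absorbed in $C$).

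Part 2(ii) I would handle by a dichotomy on the size of $\mu(A)$. If $\mu(A)\leq\Upsilon_M(r)$, use $\mu(A\cap E)\leq\mu(A)$ and apply the chain above (noting $\mu(A)\leq\mu(M)/2$, so we are on the increasing branch of $I$). If instead $\Upsilon_M(r)<\mu(A)\leq\mu(M)/2$, then $\mu(A\cap E)\leq\mu(E)\leq\Upsilon_M(r)$, and because $I$ is increasing on $(0,\mu(M)/2]$,
\[
\mu^{+}(A)\geq I(\mu(A))\geq I(\Upsilon_M(r)),
\]
so
\[
\frac{\mu(A\cap E)}{\mu^{+}(A)}\leq\frac{\Upsilon_M(r)}{I(\Upsilon_M(r))}\leq Cr.
\]

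The main obstacle is the fundamental ball estimate itself, i.e.\ the bound $\mu^{+}(B(r))\leq C\Upsilon_M(r)/r$. Once this is available, the rest is a bookkeeping exercise with the monotonicity of $t/I(t)$ and the symmetry of $I$. This ball estimate genuinely requires the homogeneous-space hypotheses of \cite{daltre}; it fails in general metric measure spaces, and this failure is exactly what motivates formulating our uncertainty inequalities in terms of the isoperimetric condition (\ref{pesoiso}) rather than the ball condition (\ref{condbola}).
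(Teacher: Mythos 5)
First, a point of comparison: the paper does not prove Theorem \ref{weakiso} at all --- it is recalled from \cite{daltre} (cf.\ also \cite{coulh}), so there is no internal proof to match. Your bookkeeping that reduces parts 1, 2(i) and 2(ii) to the single profile estimate $\Upsilon_{M}(r)\leq CrI(\Upsilon_{M}(r))$ is essentially sound, with two caveats: you use concavity of $I$ (hence monotonicity of $t/I(t)$), which is an assumption this paper adds on top of those of \cite{daltre}; and in the compact case you must cap at $\mu(M)/2$, since $I(\Upsilon_{M}(r))=0$ once the ball fills the space --- the paper deals with exactly this point in case (iii) of the proof of Theorem \ref{treviteo}. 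More importantly, that profile estimate is precisely inequality (\ref{jubileo}) of the lemma that follows the theorem, and the paper derives it \emph{from} Theorem \ref{weakiso}. Your reduction therefore inverts the paper's logical order, which is legitimate only if you can establish $\Upsilon_{M}(r)\leq CrI(\Upsilon_{M}(r))$ independently.

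That is where the proposal has a genuine gap, and the proposed repair would fail. Your route to the key estimate is the claim that balls are near-extremizers with $\mu^{+}(B(r))\leq C\Upsilon_{M}(r)/r$. Logically this points the wrong way: an upper bound on the perimeter of a competitor of measure $\Upsilon_{M}(r)$ only gives an \emph{upper} bound on $I(\Upsilon_{M}(r))$, whereas what you need is the lower bound $I(\Upsilon_{M}(r))\geq\Upsilon_{M}(r)/(Cr)$. Geometrically the claim is false in the generality of \cite{daltre}: for a finitely generated amenable group of exponential volume growth (e.g.\ the lamplighter group, which is within scope), $\Upsilon_{M}(r)\approx e^{cr}$, so $\mu^{+}(B(r))\approx\Upsilon_{M}(r)$ rather than $\Upsilon_{M}(r)/r$, and balls miss the extremizers (F{\o}lner sets) by a factor of $r$ --- yet the weak isoperimetric inequality still holds there. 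The actual proof in \cite{coulh} and \cite{daltre} never estimates the perimeter of a ball. It uses the averaging operator $A_{r}f(x)=\Upsilon_{M}(r)^{-1}\int_{B(x,r)}f\,d\mu$, the pseudo-Poincar\'{e} inequality $\left\Vert f-A_{r}f\right\Vert _{1}\leq Cr\left\Vert \left\vert \nabla f\right\vert \right\Vert _{1}$ (this is where homogeneity and the group structure enter), and the pointwise bound $\left\Vert A_{r}\chi_{E}\right\Vert _{\infty}\leq\mu(E)/\Upsilon_{M}(r)$; for instance, writing $\mu(A)=\int\chi_{A}(\chi_{A}-A_{r}\chi_{A})\,d\mu+\int\chi_{A}A_{r}\chi_{A}\,d\mu\leq Cr\mu^{+}(A)+\mu(A)^{2}/\Upsilon_{M}(r)$ and absorbing the last term when $\mu(A)\leq\Upsilon_{M}(r)/2$ gives (\ref{tr0}), and a variant of the same computation with $\chi_{E}$ gives (\ref{tr00}). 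That averaging argument is the missing idea, and no statement about balls being isoperimetric near-minimizers can substitute for it.
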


\begin{theorem}
\label{treviteo}Let $M$ be an homogeneous metric measure space. Then, the
class of isoperimetric weights contains the class of weights satisfying the
growth condition (\ref{intro2}).
\end{theorem}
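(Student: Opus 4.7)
The plan is to combine two ingredients: (i) the monotonicity of $t \mapsto t/I(t)$ on $(0,\mu(M)/2]$, which follows from the concavity of $I$ together with $I(0)=0$ and is already recorded in the Introduction as the monotonicity of $\Phi$; and (ii) the weak isoperimetric inequality of Theorem \ref{weakiso}. The underlying idea is that (ii) applied to sets of measure $\Upsilon_M(r)$ produces the bound $\Upsilon_M(r)\le CrI(\Upsilon_M(r))$ at the ``canonical'' level $\Upsilon_M(r)$, and then (i) combined with the hypothesis (\ref{intro2}) transfers this bound to the level set $\{w\le r\}$.

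In the noncompact case $\mu(M)=\infty$, I would fix $r>0$ and let $A$ range over Borel sets with $\mu(A)=\Upsilon_M(r)$. Theorem \ref{weakiso}(1) applies and gives $\mu(A)\le Cr\mu^{+}(A)$, so taking the infimum over such $A$ and recalling the definition of $I$ yields
\[
\Upsilon_M(r)\le Cr\,I(\Upsilon_M(r)).
\]
Since $w$ satisfies (\ref{intro2}), we have $\mu(\{w\le r\})\le \Upsilon_M(r)$, and since $t\mapsto t/I(t)$ is nondecreasing this gives
\[
\frac{\mu(\{w\le r\})}{I(\mu(\{w\le r\}))} \le \frac{\Upsilon_M(r)}{I(\Upsilon_M(r))} \le Cr,
\]
which is precisely the isoperimetric weight condition (\ref{pesoiso1}).

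The compact case is conceptually the same, but two points must be handled: the cutoff at $\mu(M)/2$ in (\ref{pesoiso}), and the appearance of $\Upsilon_M(r)/2$ (rather than $\Upsilon_M(r)$) in Theorem \ref{weakiso}(2)(i). I would split according to whether $\mu(\{w\le r\})\le\mu(M)/2$. If $\mu(\{w\le r\})>\mu(M)/2$, then (\ref{intro2}) forces $r\ge r_0:=\Upsilon_M^{-1}(\mu(M)/2)$, and the fixed quantity $(\mu(M)/2)/I(\mu(M)/2)$ is trivially $\le (r/r_0)\,(\mu(M)/2)/I(\mu(M)/2)$. If $\mu(\{w\le r\})\le\mu(M)/2$, I would mimic the noncompact argument via Theorem \ref{weakiso}(2)(i) applied to near-optimal sets of measure $t=\mu(\{w\le r\})$. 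The main obstacle here is that (2)(i) requires $t\le\Upsilon_M(r')/2$, so one is forced to take $r'\ge\Upsilon_M^{-1}(2t)$; to conclude one needs $\Upsilon_M^{-1}(2\Upsilon_M(r))\le C'r$, i.e.\ a mild reverse-doubling of $\Upsilon_M$. In the homogeneous-space setting of \cite{daltre} this is supplied by the structural hypotheses (Lie or finitely generated group with invariant measure), and the factor is absorbed into the constant $C(w)$.
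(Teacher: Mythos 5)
Your noncompact argument is exactly the one in the paper, and your treatment of the compact sub-case $\mu(\{w\le r\})>\mu(M)/2$ is fine (arguably cleaner than the paper's corresponding case, since it uses only that $\Upsilon_M$ is increasing and continuous). The gap is in the remaining compact sub-case $\mu(\{w\le r\})\le\mu(M)/2$, where you route the argument through Theorem \ref{weakiso}(2)(i). Because of the factor $\Upsilon_M(r')/2$ in that statement you are forced to invoke the reverse-doubling property $\Upsilon_M^{-1}(2\Upsilon_M(r))\le C'r$, and you justify it only by appeal to the ``structural hypotheses'' of \cite{daltre}. That is not established anywhere in the present setting: the superadditivity $\Upsilon_M(r)+\Upsilon_M(s)\le\Upsilon_M(r+s)$, which would give reverse doubling with constant $2$, is proved in \cite{daltre} only for noncompact $M$ (see Remark \ref{compacto}), and it cannot hold globally when $M$ is compact since $\Upsilon_M$ is eventually constant. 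As written, this step therefore rests on an unproved hypothesis.

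The repair is to use part (2)(ii) of Theorem \ref{weakiso} rather than (2)(i): taking $E=A$ with $\mu(A)=\Upsilon_M(s)\le\mu(M)/2$ in (\ref{tr00}) gives $\Upsilon_M(s)\le Cs\,\mu^{+}(A)$ directly, and infimizing over such $A$ yields $\Upsilon_M(s)\le Cs\,I(\Upsilon_M(s))$ with no factor of $2$ and no doubling. One then applies this with $s=r$ when $\Upsilon_M(r)\le\mu(M)/2$, and with $s=r'$ where $\Upsilon_M(r')=\mu(M)/2$ (so that $r'\le r$) otherwise, and transfers the bound to $t=\mu(\{w\le r\})$ via the monotonicity of $t/I(t)$, exactly as in your noncompact argument. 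This is precisely the paper's route, encoded in its inequalities (\ref{jubileo}) and (\ref{jubileo1}).
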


The next result will be useful in the proof.

\begin{lemma}
(i) Let $M$ be an homogeneous metric measure space. Then, there exists $C>0$
such that, for all $r>0,$ with $\Upsilon_{M}(r)\leq\frac{\mu(M)}{2},$ it holds
that
\begin{equation}
\Upsilon_{M}(r)\leq CrI(\Upsilon_{M}(r)). \label{jubileo}%
\end{equation}
In particular, if $\Upsilon_{M}(r)\leq\frac{\mu(M)}{2},$ then, for any Borel
set $E$ such that $\mu(E)\leq \Upsilon_{M}(r),$%
\begin{equation}
\mu(E)\leq CrI(\mu(E)). \label{jubileo1}%
\end{equation}

\end{lemma}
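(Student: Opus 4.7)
The plan is to reduce both inequalities to a single uniform consequence of the weak isoperimetric inequality and then pass from perimeter to isoperimetric profile by taking an infimum over sets of prescribed measure.

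First I would extract from Theorem \ref{weakiso} the following statement: whenever $r>0$ satisfies $\Upsilon_{M}(r)\le\mu(M)/2$, every Borel set $A\subset M$ with $\mu(A)\le\Upsilon_{M}(r)$ obeys
\[
\mu(A)\le Cr\,\mu^{+}(A).
\]
In the non-compact case this is precisely (\ref{tr0}). In the compact case I would invoke part (ii) of Theorem \ref{weakiso} with $E=A$: the hypothesis $\mu(E)\le\Upsilon_{M}(r)$ is exactly our hypothesis on $A$, while the hypothesis $\mu(A)\le\mu(M)/2$ is automatic from $\mu(A)\le\Upsilon_{M}(r)\le\mu(M)/2$, so that $\mu(A)=\mu(A\cap A)\le Cr\,\mu^{+}(A)$.

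Second, fix such an $r$ and let $t\in(0,\Upsilon_{M}(r)]$. For every Borel set $A$ with $\mu(A)=t$ the previous step gives $t\le Cr\,\mu^{+}(A)$. Taking the infimum over all such $A$ and using the very definition $I(t)=\inf_{\mu(A)=t}\mu^{+}(A)$, I arrive at
\[
t\le Cr\,I(t),\qquad 0<t\le\Upsilon_{M}(r).
\]
Specializing to $t=\Upsilon_{M}(r)$ yields (\ref{jubileo}), and specializing to $t=\mu(E)$ for any Borel set $E$ with $\mu(E)\le\Upsilon_{M}(r)$ yields (\ref{jubileo1}). (Alternatively, (\ref{jubileo1}) can be deduced from (\ref{jubileo}) by monotonicity of $t\mapsto t/I(t)$ on $(0,\mu(M)/2]$, a standard consequence of $I$ being concave and vanishing at $0$.)

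The only delicate point is the compact case, where part (i) of Theorem \ref{weakiso} features $\min\{\mu(E),\mu(E^{c})\}$ together with a threshold $\Upsilon_{M}(r)/2$ rather than $\Upsilon_{M}(r)$. Both of these mismatches are avoided by using part (ii) with $E=A$, which is tailored to individual sets; the only bookkeeping is checking that the combined hypothesis $\mu(A)\le\Upsilon_{M}(r)\le\mu(M)/2$ slots simultaneously into both conditions of part (ii), which it does. I do not anticipate any substantive obstacle beyond this verification.
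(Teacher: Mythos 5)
Your proof is correct and follows essentially the same route as the paper: both reduce to the weak isoperimetric inequality of Theorem \ref{weakiso} (using part (ii) with $E=A$ in the compact case and (\ref{tr0}) in the non-compact case), and then pass to the isoperimetric profile by taking the infimum over sets of prescribed measure. The only cosmetic difference is that you run the infimum argument at every level $t\le\Upsilon_{M}(r)$ to get both inequalities at once, whereas the paper proves (\ref{jubileo}) at $t=\Upsilon_{M}(r)$ and then deduces (\ref{jubileo1}) from the monotonicity of $t/I(t)$ --- the alternative you yourself note in parentheses.
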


\begin{proof}
Let $r>0$ be fixed$.$ Suppose that $\Upsilon_{M}(r)\leq\frac{\mu(M)}{2},$ and
let $A\subset M$ be a Borel set such that
\[
\mu(A)=\Upsilon_{M}(r).
\]
Using Theorem \ref{weakiso} we see that%
\[
\Upsilon_{M}(r)\leq rC\mu^{+}(A).
\]
Indeed, if $M$ is compact this follows directly from (\ref{tr00}), while in
the non compact case we can use (\ref{tr0}), with $E=A,$ to arrive to same
conclusion. Taking infimum we obtain,%
\begin{align*}
\Upsilon_{M}(r)  &  \leq rC\inf\{\mu^{+}(A):\mu(A)=\Upsilon_{M}(r)\}\\
&  =rCI(\Upsilon_{M}(r)),
\end{align*}
and therefore (\ref{jubileo}) holds.

Suppose now that $0<\mu(E)\leq \Upsilon_{M}(r)\leq\frac{\mu(M)}{2}.$ Then,
since $t/I(t)$ increases, we have%
\[
\frac{\mu(E)}{I(\mu(E))}\leq\frac{\Upsilon_{M}(r)}{I(\Upsilon_{M}(r))}\leq
Cr,
\]
as desired.
\end{proof}

We can now proceed with the proof of Theorem \ref{treviteo}.

\begin{proof}
We assume that $w$ is a weight such that $\mu(\{w\leq r\})\leq \Upsilon
_{M}(r).$ We are aiming to prove%
\begin{equation}
\min\left\{  \mu(\{w\leq r\}),\frac{\mu(M)}{2}\right\}  \leq CrI\left(
\min\{\mu(\{w\leq r\}),\frac{\mu(M)}{2}\}\right)  . \label{arcada}%
\end{equation}
\textbf{Case I}: Compact case. (i) Suppose that $\Upsilon_{M}(r)\leq\frac
{\mu(M)}{2}.$ Then by (\ref{jubileo1})%
\[
\mu(\{w\leq r\})\leq CrI(\{\mu(\{w\leq r\}).
\]
Since our assumptions on $w$ and $\Upsilon_{M}(r)$ force $\mu(\{w\leq
r\})\leq\frac{\mu(M)}{2},$ we see that (\ref{arcada}) holds.

(ii) Suppose that $\Upsilon_{M}(r)>\frac{\mu(M)}{2}.$ Suppose also that
$\mu(\{w\leq r\})>\frac{\mu(M)}{2}.$ Then, since $t/I(t)$ increases,%
\begin{align*}
\frac{\frac{\mu(M)}{2}}{I(\frac{\mu(M)}{2})}  &  \leq\frac{\mu(\{w\leq
r\})}{I(\mu(\{w\leq r\})}\\
&  \leq\frac{CrI(\mu(\{w\leq r\}))}{I(\mu(\{w\leq r\})}\text{ (by
(\ref{jubileo1}))}\\
&  =Cr.
\end{align*}
In other words, (\ref{arcada}) holds in this case as well.

(iii) It remains to consider the case $\Upsilon_{M}(r)>\frac{\mu(M)}{2}$,
$\mu(\{w\leq r\})\leq\frac{\mu(M)}{2}.$ By \cite{daltre} the function
$\Upsilon_{M}(s)$ is continuous and increasing. Let $r^{\prime}$ be such that
$\Upsilon_{M}(r^{\prime})=\frac{\mu(M)}{2}.$ Since $t/I(t)$ increases,%
\begin{align*}
\frac{\mu(\{w\leq r\})}{I(\mu(\{w\leq r\}))}  &  \leq\frac{\Upsilon
_{M}(r^{\prime})}{I(\Upsilon_{M}(r^{\prime}))}\\
&  \leq\frac{Cr^{\prime}I(\Upsilon_{M}(r^{\prime}))}{I(\Upsilon_{M}(r^{\prime
}))}\text{ (by (\ref{jubileo}))}\\
&  \leq Cr\text{ (since }r^{\prime}\leq r\text{).}%
\end{align*}
Therefore, (\ref{arcada}) holds in this case as well, concluding the proof of
Case I.

\textbf{Case II}: Non compact case. In this case we must have $\mu(M)=\infty$
(cf Remark \ref{compacto} below)$,$ then $\Upsilon_{M}(r)\leq\frac{\mu(M)}%
{2},$ and $\mu(\{w\leq r\})\leq\frac{\mu(M)}{2},$ therefore we see that
(\ref{arcada}) holds by (\ref{jubileo1}).
\end{proof}

\begin{remark}
\label{compacto}In \cite{daltre} the dichotomy for the normalization
conditions is given in terms of whether the space $M$ is compact or not. On
the other hand, the assumptions of \cite{daltre} force $\mu(M)<\infty,$ when
$M$ is compact and $\mu(M)=\infty,$ if $M$ is not compact. Indeed, in Section
4.1 of \cite{daltre} the authors show that for $M$ non compact, $\Upsilon
_{M}(r)<\infty,$ for all $r>0,$ and that $\Upsilon_{M}(r)+\Upsilon_{M}%
(s)\leq \Upsilon_{M}(r+s),$ for all $r,s>0.$ In particular, $n\Upsilon
_{M}(1)\leq \Upsilon_{M}(n),$ for all $n\in\mathbb{N},$ and therefore we have
$\Upsilon_{M}(n)\rightarrow\infty.$
\end{remark}

\section{Examples and applications\label{exam}}

Let $(\Omega,\mu,d)$ be a metric measure space. We will use the following
general scheme to construct isoperimetric weights in different settings. Let
$g:[0,\mu(\Omega)]\rightarrow\lbrack0,\mathbb{\infty)},$ be such that $g>0$
a.e., and
\begin{equation}
\sup_{0<t<\mu(\Omega)}g^{\ast}(t)\frac{\min\{t,\mu(\Omega)/2\}}{I(\min
\{t,\mu(\Omega)/2\}}<\infty, \label{marmar}%
\end{equation}
where the rearrangement is taken with respect to the Lebesgue measure on
$[0,\mu(\Omega)].$ It is known (cf. \cite[Corollary 7.8 pag. 86]{BS}) that
there exists a measure-preserving transformation $\sigma:\Omega\rightarrow
\lbrack0,\mu(\Omega)]$ such that $g^{\ast}\circ\sigma$ and $g^{\ast}$are
equimeasurable. In particular,
\[
\left(  g^{\ast}\circ\sigma\right)  _{\mu}^{\ast}(t)=g^{\ast}(t)\text{
\ \ \ }t\in\lbrack0,\mu(\Omega)].
\]
It follows that the function
\[
w(x)=\frac{1}{g^{\ast}(\sigma(x))},\text{ \ \ }x\in\Omega,
\]
is an isoperimetric weight. Indeed, by Lemma \ref{lmar}, $W(x)=\frac{1}%
{w(x)}=g^{\ast}(\sigma(x))\in M(\Phi)(\Omega).$ Consequently,%

\[
\left\Vert f\right\Vert _{p}\leq D^{\frac{p\alpha p}{^{p\alpha+p}}}\left\Vert
\left\vert \nabla f\right\vert \right\Vert _{p}^{\frac{\alpha}{\alpha+1}%
}\left\Vert \left(  \frac{1}{g^{\ast}\circ\sigma}\right)  ^{\alpha
}f\right\Vert _{p}^{\frac{1}{\alpha+1}},
\]
for all $f\in Lip(\Omega),$ such that $m(f)=0$ or $\int_{\Omega}fd\mu=0$ if
$\mu(\Omega)<\infty$ (or for all $f\in Lip_{0}(\Omega)$ if $\mu(\Omega
)=\infty).$

In the examples below we consider specific metric measure spaces and make
explicit calculations following the above scheme.

\subsection{Euclidean case}

\subsubsection{$\mathbb{R}^{n}$ with Lebesgue measure.}

The isoperimetric profile is given by%
\[
I_{\mathbb{R}^{n}}(r)=n\left(  \beta_{n}\right)  ^{1/n}r^{(1-1/n)}.
\]

\begin{theorem}
Let $g:[0,\infty)\rightarrow\lbrack0,\mathbb{\infty)}$ be such that $g>0$
a.e., and, moreover, suppose that
\[
\sup_{0<t<\infty}g^{\ast}(t)t^{1/n}<\infty.
\]
Let $w:\mathbb{R}^{n}\rightarrow\mathbb{R}^{+},$ be defined by
\[
w(x)=\frac{1}{g^{\ast}(\beta_{n}\left\vert x\right\vert ^{n})}.
\]
Then, $w$ is an isoperimetric weight.
\end{theorem}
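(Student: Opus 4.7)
The plan is to apply the general scheme introduced at the start of Section \ref{exam}, with the specific choice $\sigma(x)=\beta_{n}\left|x\right|^{n}$, and then invoke Lemma \ref{lmar}. The key points are: first, check that $\sigma:\mathbb{R}^{n}\to[0,\infty)$ is measure-preserving; second, verify that the resulting $W=1/w$ lies in the Marcinkiewicz space $M(\Phi)$ dictated by the Euclidean isoperimetric profile; third, conclude by Lemma \ref{lmar}.

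For the first step I compute $\mu_{\mathbb{R}^{n}}(\{x:\sigma(x)\leq t\})=\mu_{\mathbb{R}^{n}}(\{|x|\leq(t/\beta_{n})^{1/n}\})=\beta_{n}\cdot(t/\beta_{n})=t$, so $\sigma$ pushes Lebesgue measure on $\mathbb{R}^{n}$ forward to Lebesgue measure on $[0,\infty)$. Consequently, $g^{\ast}\circ\sigma$ and $g^{\ast}$ are equimeasurable, and therefore
\[
(g^{\ast}\circ\sigma)^{\ast}_{\mu_{\mathbb{R}^{n}}}(t)=g^{\ast}(t),\quad t\in[0,\infty).
\]

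For the second step, since $I_{\mathbb{R}^{n}}(r)=n\beta_{n}^{1/n}r^{1-1/n}$ and $\mu(\mathbb{R}^{n})=\infty$, the function $\Phi$ from Lemma \ref{lmar} reduces to $\Phi(t)=t/I_{\mathbb{R}^{n}}(t)=t^{1/n}/(n\beta_{n}^{1/n})$. Setting $W(x)=1/w(x)=g^{\ast}(\beta_{n}|x|^{n})=(g^{\ast}\circ\sigma)(x)$, I obtain
\[
\|W\|_{M(\Phi)}=\sup_{t>0}W_{\mu_{\mathbb{R}^{n}}}^{\ast}(t)\,\Phi(t)=\frac{1}{n\beta_{n}^{1/n}}\sup_{t>0}g^{\ast}(t)\,t^{1/n},
\]
which is finite by the hypothesis on $g$.

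For the third step, Lemma \ref{lmar} then immediately yields that $w=1/W$ is an isoperimetric weight in the sense of \eqref{unouno}. There is no serious obstacle here; the only mildly delicate point is verifying that $\sigma$ is measure-preserving, which reduces to the elementary volume identity for Euclidean balls. Once that is in hand, the proof is a one-line application of the general scheme in Section \ref{exam} together with Lemma \ref{lmar}.
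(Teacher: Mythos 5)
Your proposal is correct and follows essentially the same route as the paper: the paper also reduces the claim to the identity $W^{\ast}(t)=g^{\ast}(t)$ (citing Talenti for the rearrangement of radially distributed functions, where you instead verify directly that $\sigma(x)=\beta_{n}|x|^{n}$ is measure-preserving) and then concludes via Lemma \ref{lmar}.
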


\begin{proof}
Let $W=\frac{1}{w}.$ Then, $W(x)=g^{\ast}(\beta_{n}\left\vert x\right\vert
^{n})$ and $W^{\ast}(t)=g^{\ast}(t)$ (cf. \cite{Talen}). Consequently, $W\in
M(\Phi)\left(  \mathbb{R}^{n}\right)  ,$ and the result follows.
\end{proof}

\subsubsection{The closed upper half of Euclidean space $\mathbb{R}^{n}$}

For simplicity we assume that $n=2$.

Let $H_{2}=\{(x,y)\in\mathbb{R}^{2}:$ $y\geq0\}.$ By reflection across the
boundary of $H_{2},$ combined with the classical isoperimetric inequality in
$\mathbb{R}^{2},$ it follows that the corresponding isoperimetric profile,
$I_{H_{2}},$ is given by (cf. \cite{Talen})%
\[
I_{H_{2}}(t)=\beta_{2}^{1/2}t^{1/2}.
\]

\begin{theorem}
Let $g:[0,\infty)\rightarrow\lbrack0,\mathbb{\infty)}$ be such that $g>0$
a.e., and%
\[
\sup_{0<t<\infty}g^{\ast}(t)t^{1/2}<\infty.
\]
Let $k>0,$ and let $w:H_{2}\rightarrow\mathbb{R}^{+}$ be defined by
\[
w(x)=\frac{1}{g^{\ast}\left(  \frac{1}{k+1}B\left(  \frac{k+1}{2},\frac{1}%
{2}\right)  \left(  x^{2}+y^{2}\right)  ^{k/2+1}\right)  },
\]
where $B$ denotes the Euler beta function. Then, $w$ is an isoperimetric weight.
\end{theorem}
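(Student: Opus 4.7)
The plan is to apply the general scheme laid out at the beginning of this section. Since $\mu(H_2)=\infty$, we have $\Phi(t)=t/I_{H_2}(t)=t^{1/2}/\beta_2^{1/2}$, and by Lemma \ref{lmar} the claim that $w$ is an isoperimetric weight is equivalent to $W:=1/w \in M(\Phi)(H_2)$, i.e.\ to
\[
\sup_{t>0} W_\mu^{\ast}(t)\,t^{1/2} <\infty .
\]

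The first step would be to identify
\[
\sigma(x,y):=\frac{1}{k+1}\,B\!\left(\frac{k+1}{2},\frac{1}{2}\right)(x^{2}+y^{2})^{k/2+1}
\]
as a measure-preserving transformation from $H_2$ onto $[0,\infty)$ with Lebesgue measure, so that by construction $W=g^{\ast}\circ\sigma$. Once that is in place, $g^{\ast}\circ\sigma$ and $g^{\ast}$ are equimeasurable, and therefore $W_\mu^{\ast}(t)=g^{\ast}(t)$ for every $t>0$.

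The only nontrivial input is the measure-preservation of $\sigma$. For each $T>0$ the level set $\{\sigma\le T\}$ is a half-disc $B_R\cap H_2$ with $R=R(T)$ obtained by inverting $\sigma$. Writing the measure of this half-disc in polar coordinates $(r,\theta)\in[0,R]\times[0,\pi]$, the integral factors as
\[
\int_{0}^{R} r^{k+1}\,dr \cdot \int_{0}^{\pi}\sin^{k}\theta\,d\theta ,
\]
and the Euler beta identity $\int_{0}^{\pi}\sin^{k}\theta\,d\theta=B\!\left(\tfrac{k+1}{2},\tfrac{1}{2}\right)$ combines with the normalizing constant in $\sigma$ to collapse the expression to $T$, as required.

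Granted $W_\mu^{\ast}(t)=g^{\ast}(t)$, the conclusion is immediate:
\[
\|W\|_{M(\Phi)}=\sup_{t>0} W_\mu^{\ast}(t)\Phi(t)=\beta_{2}^{-1/2}\sup_{t>0} g^{\ast}(t)\,t^{1/2}<\infty
\]
by hypothesis, so Lemma \ref{lmar} identifies $w$ as an isoperimetric weight. The main obstacle is bookkeeping: one must make sure that the normalizing constant $\tfrac{1}{k+1}B\!\left(\tfrac{k+1}{2},\tfrac{1}{2}\right)$, the exponent $k/2+1$, and the ambient measure on $H_2$ are paired correctly, so that the beta factor produced in polar coordinates cancels cleanly and $\sigma$ is genuinely measure-preserving.
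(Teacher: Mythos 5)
Your overall strategy is the same as the paper's: reduce via Lemma \ref{lmar} to showing $W=1/w\in M(\Phi)$, and obtain $W_{\mu}^{\ast}=g^{\ast}$ from the fact that $W=g^{\ast}\circ\sigma$ with $\sigma$ measure preserving (the paper simply cites Talenti for this equimeasurability and stops there). The trouble is in the step you yourself flag as ``the only nontrivial input.'' The integral you write for the measure of $\{\sigma\le T\}=B_{R}\cap H_{2}$, namely $\int_{0}^{R}r^{k+1}\,dr\cdot\int_{0}^{\pi}\sin^{k}\theta\,d\theta$, is the measure of the half-disc with respect to the \emph{weighted} measure $y^{k}\,dx\,dy$ (the integrand $r^{k+1}\sin^{k}\theta=(r\sin\theta)^{k}\cdot r$ is the density $y^{k}$ times the polar Jacobian), not with respect to the Lebesgue measure on $H_{2}$ that the rest of your argument presupposes: your identification $\Phi(t)=t/I_{H_{2}}(t)=t^{1/2}/\beta_{2}^{1/2}$ uses the profile $I_{H_{2}}(t)=\beta_{2}^{1/2}t^{1/2}$ of $(H_{2},\mathrm{Lebesgue})$. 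With Lebesgue measure the half-disc has area $\pi R^{2}/2\propto T^{2/(k+2)}$, so $\sigma$ is \emph{not} measure preserving for any $k>0$; one gets $W_{\mu}^{\ast}(t)=g^{\ast}\bigl(c\,t^{(k+2)/2}\bigr)$ rather than $g^{\ast}(t)$, and the hypothesis $\sup_{t}g^{\ast}(t)t^{1/2}<\infty$ then fails to give $\sup_{t}W_{\mu}^{\ast}(t)t^{1/2}<\infty$ (take $g^{\ast}(t)=t^{-1/2}$: then $W_{\mu}^{\ast}(t)t^{1/2}\approx t^{-k/4}\to\infty$). So the two halves of your argument are computed against two different measures, and the proof does not close as written.

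Even if one reads everything in the weighted setting $d\mu=y^{k}\,dx\,dy$ --- which is what the Beta constant and the exponent $k/2+1$ are really encoding, and is the setting of the Talenti reference the paper leans on --- the claimed cancellation is not exact: $\int_{0}^{R}r^{k+1}\,dr=R^{k+2}/(k+2)$ while the normalizer in $\sigma$ is $1/(k+1)$, so the expression collapses to $\tfrac{k+1}{k+2}T$, not $T$ (harmless for Marcinkiewicz membership, but a sign the computation was not actually carried through); and, more importantly, in that setting the isoperimetric profile of the half-plane is no longer $\beta_{2}^{1/2}t^{1/2}$, so $\Phi$ and the hypothesis on $g$ would both have to be re-derived. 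To repair the proof you must fix one measure on $H_{2}$, compute $\mu(B_{R}\cap H_{2})$ and $I_{(H_{2},\mu,d)}$ for that \emph{same} measure, and only then match the hypothesis $\sup_{t}g^{\ast}(t)\Phi(t)<\infty$ against it. (The paper's own presentation is partly to blame here, since it states the unweighted profile next to a visibly weighted formula for $w$, but the gap in your verification is real.)
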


\begin{proof}
Let $W=\frac{1}{w}.$ Then, $W(x)=g^{\ast}\left(  \frac{1}{k+1}B\left(
\frac{k+1}{2},\frac{1}{2}\right)  \left(  x^{2}+y^{2}\right)  ^{k/2+1}\right)
$ and $W^{\ast}(t)=g^{\ast}(t)$ (cf. \cite{Talen}). It follows that $W\in
M(\Phi)\left(  \mathbb{R}^{n}\right)  $.
\end{proof}

\subsection{The unit sphere}

Let $n\geq2$ be an integer, and let $\mathbb{S}^{n}\subset\mathbb{R}^{n+1}$ be
the unit sphere. For each $n\geq2,$ the $n-$dimensional Hausdorff measure of
$\mathbb{S}^{n}$ is given by $\omega_{n}=2\pi^{\frac{n+1}{2}}/\Gamma
(\frac{n+1}{2}).$ On $\mathbb{S}^{n}$ we consider the geodesic distance $d,$
and the uniform probability measure $\sigma_{n}$. For $\theta\in\lbrack
-\pi/2,\pi/2]$, let%
\[
\varphi_{n}(\theta)=\frac{\omega_{n-1}}{\omega_{n}}\cos^{n-1}\theta
\text{\ \ \ \ and \ \ \ \ \ }\Phi_{n}(\theta)=\int_{-\pi/2}^{\theta}%
\varphi_{n}(s)ds.
\]
It is known that the isoperimetric profile of the sphere $I_{\mathbb{S}^{n}}$
coincides with $I_{n}=\varphi_{n}\circ\Phi_{n}^{-1}$ $\ $(cf. \cite{bart})$.$

\begin{theorem}
\label{esferico}Let $g:[0,1]\rightarrow\lbrack0,\mathbb{\infty)}$ be such that
$g>0$ a.e., and
\[
\sup_{0<t<1}g^{\ast}(t)\frac{\min\{t,1/2\}}{I_{n}(\min\{t,1/2\})}<\infty,
\]
where the rearrangement is taken with respect to the Lebesgue measure on
$[0,1].$ Let $w:\mathbb{S}^{n}\rightarrow\mathbb{R}^{+},$ be defined by
\[
w(\theta_{1},.....\theta_{n+1})=\frac{1}{g^{\ast}(\Phi_{n}(\theta_{1}))}.
\]
Then, $w$ is an isoperimetric weight.
\end{theorem}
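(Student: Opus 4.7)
The plan is to apply the general scheme laid out at the start of Section \ref{exam}: to exhibit a measure-preserving map $\sigma:\mathbb{S}^{n}\to[0,1]$ such that the candidate weight $w$ has the form $1/(g^{\ast}\circ\sigma)$, and then invoke Lemma \ref{lmar} together with the hypothesis to conclude that $W=1/w$ lies in $M(\Phi)(\mathbb{S}^{n})$. Writing $\sigma(\theta_{1},\dots,\theta_{n+1})=\Phi_{n}(\theta_{1})$, the formula for $w$ in the statement reads exactly $w=1/(g^{\ast}\circ\sigma)$, so the only substantive thing to check is that $\sigma$ is measure preserving from $(\mathbb{S}^{n},\sigma_{n})$ to $([0,1],m)$.

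The key step is therefore to verify that $\Phi_{n}\circ\theta_{1}$ is uniformly distributed on $[0,1]$ under the uniform probability measure $\sigma_{n}$. By the slicing formula for the surface measure on $\mathbb{S}^{n}$, the pushforward of $\sigma_{n}$ under the first-coordinate projection $\theta_{1}:\mathbb{S}^{n}\to[-\pi/2,\pi/2]$ has density $\varphi_{n}(\theta)=\tfrac{\omega_{n-1}}{\omega_{n}}\cos^{n-1}\theta$; this is standard (cf.\ \cite{bart}) and is precisely why $\Phi_{n}(\theta)=\int_{-\pi/2}^{\theta}\varphi_{n}(s)\,ds$ appears in the definition of $I_{\mathbb{S}^{n}}$. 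Since $\Phi_{n}$ is strictly increasing and maps $[-\pi/2,\pi/2]$ onto $[0,1]$, a one-line change-of-variables shows that $\sigma_{n}\{x:\Phi_{n}(\theta_{1}(x))\le t\}=t$ for all $t\in[0,1]$, i.e.\ $\sigma$ is measure preserving.

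Measure preservation of $\sigma$ implies that $W=g^{\ast}\circ\sigma$ and $g^{\ast}$ are equimeasurable on $[0,1]$, hence
\[
W^{\ast}_{\sigma_{n}}(t)=(g^{\ast}\circ\sigma)^{\ast}_{\sigma_{n}}(t)=g^{\ast}(t),\qquad t\in(0,1).
\]
Plugging this into the rearrangement formulation of the Marcinkiewicz norm and using the assumption yields
\[
\|W\|_{M(\Phi)}=\sup_{0<t<1}W^{\ast}_{\sigma_{n}}(t)\,\Phi(t)=\sup_{0<t<1}g^{\ast}(t)\,\frac{\min\{t,1/2\}}{I_{n}(\min\{t,1/2\})}<\infty.
\]
Lemma \ref{lmar} then identifies $w=1/W$ as an isoperimetric weight, completing the argument.

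The main (minor) obstacle is just the measure-preservation check for $\sigma$, which amounts to recalling the correct density $\varphi_{n}$ of the first-coordinate projection of $\sigma_{n}$; everything else is formal and already packaged in the general scheme of Section \ref{exam}.
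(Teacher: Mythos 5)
Your proposal is correct and follows essentially the same route as the paper: both arguments reduce to the fact that $\Phi_{n}(\theta_{1})$ is uniformly distributed on $[0,1]$ under $\sigma_{n}$ (the paper phrases this as a direct computation of the distribution function $\mu_{W}$, you phrase it as measure preservation of $\sigma=\Phi_{n}\circ\theta_{1}$), whence $W_{\mu}^{\ast}=g^{\ast}$ and Lemma \ref{lmar} applies. No gaps.
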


\begin{proof}
Let
\[
W(\theta)=\frac{1}{w(\theta_{1},.....,\theta_{n+1})}:=g^{\ast}(\Phi_{n}%
(\theta_{1})),\text{ \ \ \ }(\theta_{1},.....\theta_{n+1})\in\mathbb{S}^{n}.
\]
We need to show that $W=\frac{1}{w}\in M(\Phi)\left(  \mathbb{S}^{n}%
,\mu\right)  .$ Let $m_{f}(t)$ denote the distribution function of $f$ with
respect to the Lebesgue measure on $[0,1],$ and let $\mu:=\sigma_{n}.$ Then,
$W$ and $g$ are equimeasurable. Indeed,%
\begin{align*}
\mu_{W}(t)  &  =\mu\{\theta\in\mathbb{S}^{n}:W(x)>t\}\\
&  =\mu\{\theta\in\mathbb{S}^{n}:g^{\ast}(\Phi_{n}(\theta_{1}))>t\}\\
&  =\mu\{\theta\in\mathbb{S}^{n}:\Phi_{n}(\theta_{1})\leq m_{g}(t)\}\\
&  =\mu\{\theta\in\mathbb{S}^{n}:\theta_{1}\leq\Phi_{n}^{-1}(m_{g}(t))\}\\
&  =m_{g}(t).
\end{align*}
Therefore,%
\[
W_{\mu}^{\ast}(t)=g^{\ast}(t).
\]
Consequently, in view of our assumptions on $g,$ $W=\frac{1}{w}\in
M(\Phi)\left(  \mathbb{S}^{n},\mu\right)  .$
\end{proof}

\subsection{Log concave measures\label{logconcave}}

We consider product measures on $\mathbb{R}^{n}$ that are constructed using
the measures on $\mathbb{R}$ defined by the densities
\[
\mathbf{d}\mu_{\Psi}(x)=Z_{\Psi}^{-1}\exp\left(  -\Psi(\left\vert x\right\vert
)\right)  dx=\varphi(x)dx,\text{ \ \ \ }x\in\mathbb{R},
\]
where $\Psi$ is convex, $\sqrt{\Psi}$ concave, $\Psi(0)=0,$ and such that
$\Psi$ is $\mathcal{C}^{2}$ on $[\Psi^{-1}(1),+\infty),$ and where, moreover,
$Z_{\Psi}^{-1}$ is chosen to ensure that $\mu_{\Psi}(\mathbb{R)=}1$.

Let $H:\mathbb{R}\rightarrow(0,1)$ be the distribution function of $\mu_{\Psi
}$, i.e.
\begin{equation}
H(r)=\int_{-\infty}^{r}\varphi(x)dx=\mu_{\Psi}(-\infty,r). \label{def}%
\end{equation}
It is known that the isoperimetric profile for $(\mathbb{R},d_{n},\mu_{\Psi
})\mathbb{\ }$is given by (cf. \cite{Bor} and \cite{Bob})
\[
I_{\mu_{\Psi}}(t)=\varphi\left(  H^{-1}(\min\{t,1-t\}\right)  =\varphi\left(
H^{-1}(t\right)  ),\text{ \ \ \ }t\in\lbrack0,1].
\]
We shall denote by $\mu_{\Psi}^{\otimes n}=$ $\underset{n\text{ times}%
}{\underbrace{\mu_{\Psi}\otimes\mu_{\Psi}\otimes...\otimes\mu_{\Psi}}},$ the
product probability measures on $\mathbb{R}^{n}.$ It is known that the
isoperimetric profiles $I_{\mu_{\Psi}^{\otimes n}}$ are dimension free (cf.
\cite{barthe}): there exists a constant $c(\Psi)$ such that for all
$n\in\mathbb{N}$%
\[
c(\Psi)I_{\mu_{\Psi}}(t)\leq I_{\mu_{\Psi}^{\otimes n}}(t)\leq I_{\mu_{\Psi}%
}(t).
\]

\begin{theorem}
Let $g:[0,1]\rightarrow\lbrack0,\mathbb{\infty)}$ be such that $g>0$ a.e.,
and
\begin{equation}
\sup_{0<t<1}g^{\ast}(t)\frac{\min\{t,1/2)\}}{I_{\mu_{\Psi}}(\min
\{t,1/2\})}<\infty, \label{xfin}%
\end{equation}
where the rearrangement is taken with respect to the Lebesgue measure on
$[0,1].$ Let $w:\mathbb{R}^{n}\rightarrow\mathbb{R}^{+}$ be defined by,
\[
w(x_{1},.....x_{n})=\frac{1}{g^{\ast}(H(x_{1}))}.
\]
Then, $w$ is an isoperimetric weight.
\end{theorem}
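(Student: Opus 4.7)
The proof will follow the exact same scheme as Theorem \ref{esferico}, so the plan is to reduce everything to a statement about the Marcinkiewicz space $M(\Phi)$ via Lemma \ref{lmar} and then identify the rearrangement of $W = 1/w$.

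Specifically, set $W(x_1,\ldots,x_n) = g^{\ast}(H(x_1))$. Since $\mu_\Psi^{\otimes n}$ is a probability measure, by Lemma \ref{lmar} it suffices to prove that $W \in M(\Phi)(\mathbb{R}^n, \mu_\Psi^{\otimes n})$, where $\Phi(t) = \min\{t,1/2\}/I_{\mu_\Psi^{\otimes n}}(\min\{t,1/2\})$, i.e.\ that $\sup_{t>0} W^{\ast}_{\mu_\Psi^{\otimes n}}(t)\,\Phi(t) < \infty$.

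The key step is to compute $W^{\ast}_{\mu_\Psi^{\otimes n}}$. First, because $W$ depends only on the coordinate $x_1$ and $\mu_\Psi^{\otimes n}$ is a product measure, the distribution function reduces to a one-dimensional calculation: for $t>0$,
\[
\mu_W(t) = \mu_\Psi^{\otimes n}\{W>t\} = \mu_\Psi\{x_1 \in \mathbb{R} : g^{\ast}(H(x_1)) > t\}.
\]
Since $g^{\ast}$ is right-continuous and non-increasing, $\{g^{\ast}(H(x_1))>t\} = \{H(x_1)\le m_g(t)\}$, where $m_g$ is the distribution function of $g$ with respect to Lebesgue measure on $[0,1]$. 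Now $H$ is, by definition \eqref{def}, the cumulative distribution function of $\mu_\Psi$, hence it pushes $\mu_\Psi$ forward to Lebesgue measure on $(0,1)$; consequently $\mu_\Psi\{H(x_1)\le s\} = s$ for $s\in[0,1]$. Therefore $\mu_W(t) = m_g(t)$, which after inverting yields $W^{\ast}_{\mu_\Psi^{\otimes n}}(t) = g^{\ast}(t)$, exactly as in Theorem \ref{esferico}.

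Finally, I would bridge the gap between the hypothesis \eqref{xfin} (stated in terms of $I_{\mu_\Psi}$) and the desired $M(\Phi)$ condition (stated in terms of $I_{\mu_\Psi^{\otimes n}}$) via the dimension-free comparison $c(\Psi)\,I_{\mu_\Psi}(t)\le I_{\mu_\Psi^{\otimes n}}(t)$ of \cite{barthe}, which gives
\[
\sup_{0<t<1} W^{\ast}_{\mu_\Psi^{\otimes n}}(t)\,\frac{\min\{t,1/2\}}{I_{\mu_\Psi^{\otimes n}}(\min\{t,1/2\})} \le \frac{1}{c(\Psi)}\sup_{0<t<1} g^{\ast}(t)\,\frac{\min\{t,1/2\}}{I_{\mu_\Psi}(\min\{t,1/2\})} < \infty.
\]
No step presents a real obstacle; the only point requiring care is the invocation of Barthe's dimension-free lower bound, which is precisely what makes the construction uniform in the ambient dimension $n$. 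Once this is in place, the conclusion $W\in M(\Phi)$ and hence the isoperimetric nature of $w$ follows from Lemma \ref{lmar}.
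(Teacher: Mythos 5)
Your proposal is correct and follows essentially the same route as the paper: identify $W=1/w=g^{\ast}(H(x_1))$, show $W$ and $g$ are equimeasurable via the push-forward of $\mu_{\Psi}$ by its distribution function $H$ (exactly the computation the paper borrows from the spherical case), and conclude $W\in M(\Phi)$ by Lemma \ref{lmar}. Your explicit invocation of the dimension-free comparison $c(\Psi)I_{\mu_{\Psi}}\le I_{\mu_{\Psi}^{\otimes n}}$ to pass from the hypothesis \eqref{xfin} (stated for the one-dimensional profile) to the $M(\Phi)$ condition for $I_{\mu_{\Psi}^{\otimes n}}$ is a step the paper leaves implicit, and it is a welcome clarification rather than a deviation.
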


\begin{proof}
Let
\[
W(x)=\frac{1}{w(x_{1},.....x_{n})}:=g^{\ast}(H(x_{1})),\text{ \ \ \ }%
x\in\mathbb{R}^{n}.
\]
A calculation, similar to the one given during the course of the proof of
Theorem \ref{esferico}, shows that $W$ and $g$ are equimeasurable. Thus,%
\[
W_{\mu}^{\ast}(t)=g^{\ast}(t),
\]
and we see that $W\in M(\Phi)\left(  \mathbb{R}^{n}\right)  ,$ as we wished to show.
\end{proof}

\begin{example}
The prototype function $g$ that satisfies (\ref{xfin}) is given by
$g(t)=\frac{I_{\mu_{\Psi}}(\min\{t,1/2\})}{\min\{t,1/2\}}.$ In fact, since
$\frac{I_{\mu_{\Psi}}(t)}{t}$ decreases, we see that $g^{\ast}(t)=g(t).$ In
particular, for $\Psi(\left\vert x\right\vert )=\frac{\left\vert x\right\vert
^{p}}{p},$ $p\in\lbrack1,2],$ the isoperimetric profile $I_{\mu_{\Psi}}(t)$
satisfies (cf. \cite{Talen})%
\[
I_{\mu_{\Psi}}(t)\simeq t\left(  \log\frac{1}{t}\right)  ^{1-1/p},\text{
\ }0<t\leq1/2.
\]
Thus, if we let $g:[0,1]\rightarrow\lbrack0,\mathbb{\infty)}$ be such that
$g>0$ a.e., and suppose that
\[
\sup_{0<t<1/2}g^{\ast}(t)\frac{1}{\left(  \log\frac{1}{t}\right)  ^{1-1/p}%
}<\infty,
\]
it follows that the function%
\[
W(x_{1},.....x_{n})=\frac{1}{w(x_{1},.....x_{n})}:=g^{\ast}\left(  Z_{\Psi
}^{-1}\int_{-\infty}^{x_{1}}\exp\left(  \frac{-\left\vert x_{1}\right\vert
^{p}}{p}\right)  dx_{1}\right)
\]
is an isoperimetric weight.
\end{example}

\subsection{Transference\label{secc:transfer}}

We indicate a simple transference result that follows directly from the
characterization of isoperimetric weights in terms of Marcinkiewicz spaces.
Suppose that $(\Omega_{i},\mu_{i},d_{i}),i=1,2,$ are two metric measure spaces
as above, such that, moreover with $\mu_{1}(\Omega_{1})=\mu_{2}(\Omega_{2}),$
and $I_{1}:=I_{(\Omega_{1},\mu_{1},d_{1})}\geq I_{2}:=I_{(\Omega_{2},\mu
_{2},d_{2})}.$ Then, the corresponding uncertainty inequalities for
$(\Omega_{2},\mu_{2},d_{2})$ can be transferred to $(\Omega_{1},\mu_{1}%
,d_{1})$ in the sense that, for all $f\in Lip(\Omega_{1})$ that satisfy
suitably prescribed cancellations\footnote{The key point of this transfer is
that, by abuse of notation, we have \textquotedblleft
switched\textquotedblright\ the isoperimetric weights of $(\Omega_{1},\mu
_{1})$ by the \textquotedblleft isoperimetric weights\textquotedblright\ of
$(\Omega_{2},\mu_{2}).$ In this sense the transfer is apparently connected
with the construction of representations of the space $M(\Phi)$ for different
metric measure spaces. It would be interesting to study the connection of our
transference result with the recent results on the transport of weighted
Poincar\'{e} inequalities (cf. \cite{dariogoz}).},%
\begin{equation}
\left\Vert f\right\Vert _{L^{1}(\Omega_{1},\mu_{1})}\leq C\left\Vert
w\right\Vert _{M(\Phi_{2})}^{1/2}\left\Vert \left\vert \nabla f\right\vert
\right\Vert _{L^{1}(\Omega_{1},\mu_{1})}^{1/2}\left\Vert wf\right\Vert
_{L^{1}(\Omega_{1},\mu_{1})}^{1/2},w\in M(\Phi_{2})(\Omega_{1},\mu_{1}),
\label{uncertau}%
\end{equation}
where $\Phi_{i}(t)=\frac{t}{I_{i}(t)}.$

Indeed, if $I_{1}\geq I_{2},$ then $M(\Phi_{2})\subset M(\Phi_{1}).$
Therefore, the result follows from Lemma \ref{lmar} and Remark \ref{reml1}.

\subsection{Strichartz inequalities and Sobolev inequalities\label{seccstrich}%
}

The initial step of the interpolation process that leads to uncertainty
inequalities is directly connected with the following inequality that one
finds in Strichartz \cite{strichartz}. A different proof with sharp constants
\footnote{Here we shall not be concerned with the best value of the constants
involved but we do note that $c_{n}(p)\rightarrow\infty$ as $p\rightarrow n.$}
was later found by Faris \cite{faris}%
\[
\left\Vert fg\right\Vert _{L^{p}(\mathbb{R}^{n})}\leq c_{n}(p)\left\Vert
g\right\Vert _{L^{n,\infty}}\left\Vert \left\vert \nabla f\right\vert
\right\Vert _{L^{p}(\mathbb{R}^{n})},\text{ }f\in C_{0}^{\infty}%
(\mathbb{R}^{n}),\text{ }1\leq p<n.
\]
These inequalities are connected with the classical Sobolev inequality via the
sharp form of the Sobolev inequality involving Lorentz spaces (cf. \cite{tal}
and the references therein),%
\begin{equation}
\left\{  \int_{0}^{\infty}(f^{\ast}(s)s^{1/\bar{p}})^{p}\frac{ds}{s}\right\}
^{1/p}\leq C_{n}(p)\left\Vert \left\vert \nabla f\right\vert \right\Vert
_{L^{p}(\mathbb{R}^{n})},\text{ }f\in C_{0}^{\infty}(\mathbb{R}^{n}),\text{
}1\leq p<n,\text{ }\frac{1}{\bar{p}}=\frac{1}{p}-\frac{1}{n}. \label{str1}%
\end{equation}
Indeed, suppose that $f\in C_{0}^{\infty}(\mathbb{R}^{n}),g\in L^{n,\infty},$
and let $1\leq p<n;$ then%
\begin{align*}
\left\Vert fg\right\Vert _{L^{p}(\mathbb{R}^{n})}^{p}  &  \leq\int_{0}%
^{\infty}f^{\ast}(s)^{p}g^{\ast}(s)^{p}ds\\
&  \leq\left\Vert g\right\Vert _{L^{n,\infty}}^{p}\int_{0}^{\infty}f^{\ast
}(s)^{p}s^{-p/n}ds\\
&  =\left\Vert g\right\Vert _{L^{n,\infty}}^{p}\int_{0}^{\infty}f^{\ast
}(s)^{p}s^{p/\bar{p}}\frac{ds}{s}\\
&  \leq C_{n}(p)^{p}\left\Vert g\right\Vert _{L^{n,\infty}}^{p}\left\Vert
\left\vert \nabla f\right\vert \right\Vert _{L^{p}(\mathbb{R}^{n})}^{p}.
\end{align*}
Faris' method is closely connected with the above presentation. Note that in
the context of (\ref{str1}) the class of isoperimetric weights can be
described as
\[
\{w:\frac{1}{w}\in M(\Phi)\}=\{w:\frac{1}{w}\in L^{n,\infty}\}=\{w:\frac{1}%
{w}\text{ is a Strichartz multiplier}\}.
\]
When $p\rightarrow n$ the constant $C_{n}(p)$ blows up. The sharp end point
result for $p=n$ is provided by the Brezis-Wainger inequality. Let $\Omega$ be
a domain in $\mathbb{R}^{n}.$ Then, for all functions $f\in C_{0}^{\infty
}(\Omega)$,%
\[
\left\{  \int_{0}^{\left\vert \Omega\right\vert }\frac{f^{\ast}(s)^{n}%
}{(1+\log\frac{\left\vert \Omega\right\vert }{s})^{n}}\frac{ds}{s}\right\}
^{1/n}\leq c_{n}\left\Vert \left\vert \nabla f\right\vert \right\Vert
_{L^{n}(\mathbb{R}^{n})}.
\]
Thus, in this case, the corresponding Strichartz inequality holds if we
replace the condition \textquotedblleft$g\in L^{n,\infty}(\Omega)"$ by
\textquotedblleft$g\in L_{\log}(n,\infty)(\Omega)",$ where
\[
\left\Vert g\right\Vert _{L_{\log}(n,\infty)(\Omega)}=\sup_{t}\left\{
g^{\ast}(t)t^{1/n}(1+\log\frac{\left\vert \Omega\right\vert }{t})\right\}  .
\]
In this notation we have,%
\[
\left\Vert fg\right\Vert _{L^{n}(\mathbb{R}^{n})}^{n}\leq C_{n}(n)^{n}%
\left\Vert g\right\Vert _{L_{\log}(n,\infty)(\Omega)}^{n}\left\Vert \left\vert
\nabla f\right\vert \right\Vert _{L^{n}(\mathbb{R}^{n})}^{n}.
\]

More generally, let us consider Sobolev inequalities on a metric measure space
$\left(  \Omega,\mu,d\right)  $ using rearrangement invariant norms. Let $I$
be an isoperimetric estimator for $\left(  \Omega,\mu,d\right)  $ and on
measurable functions on $(0,\mu(\Omega))$ let us define the isoperimetric
Hardy operator $\tilde{Q}_{I}$ by
\[
\tilde{Q}_{I}f(t)=\frac{I(t)}{t}\int_{t}^{\mu(\Omega)/2}f(s)\frac{ds}{I(s)}.
\]
We give the details for the case $\mu(\Omega)=\infty,$ the case $\mu
(\Omega)<\infty$ follows mutatis mutandi. From Theorem \ref{teopp}, part 2,
below,%
\[
\left\Vert f_{\mu}^{\ast}(t)\frac{I(t)}{t}\right\Vert _{\bar{X}}\leq\left\Vert
\tilde{Q}_{I}\right\Vert _{\bar{X}\rightarrow\bar{X}}\left\Vert \left\vert
\nabla f\right\vert \right\Vert _{X},\text{ \ \ \ }f\in Lip_{0}(\Omega).
\]
We can then reinterpret the last inequality as a weighted norm inequality
(\textquotedblleft the Strichartz inequality in $X"):$ for $f\in
Lip_{0}(\Omega),$ and $g\in M(\Phi),$
\begin{align*}
\left\Vert fg\right\Vert _{X}  &  \leq\left\Vert f_{\mu}^{\ast}(t)g_{\mu
}^{\ast}\right\Vert _{\bar{X}}\\
&  \leq\left\Vert g\right\Vert _{M(\Phi)}\left\Vert f_{\mu}^{\ast}%
(t)\frac{I(t)}{t}\right\Vert _{\bar{X}}\\
&  \leq\left\Vert g\right\Vert _{M(\Phi)}\left\Vert \tilde{Q}_{I}\right\Vert
_{\bar{X}\rightarrow\bar{X}}\left\Vert \left\vert \nabla f\right\vert
\right\Vert _{X}.
\end{align*}

\subsection{Besov Inequalities\label{seccbesov}}

In this brief section we indicate how the inequalities can be extended to
suitable Besov spaces. Here we are aiming to illustrate the method rather than
to prove the most general results. Thus, we shall focus on $(\Omega
,d,\mu)=\mathbb{R}^{n},$ and $L^{q}$ spaces.

Our starting point is the following equivalence (cf. \cite{bl}) which here we
may take as a definition: Let $1\leq q\leq\infty,0<s<1,$
\[
\left\Vert \frac{t^{-s/n}\omega_{q}(t^{1/n},f)}{t^{1/q}}\right\Vert
_{L^{q}(0,\infty)}\approx\left\Vert f\right\Vert _{\mathring{B}_{q,q}%
^{s}(\mathbb{R}^{n})},
\]
where $\omega_{q}$ is the $q-$modulus of continuity defined by%
\[
\omega_{q}(t,f)=\sup_{\left\vert h\right\vert \leq t}\left\Vert f(\circ
+h)-f(\circ)\right\Vert _{L^{q}\left(  \mathbb{R}^{n}\right)  }.
\]
Suppose that $f^{\ast\ast}(\infty)=0,$ then following estimate is well known
(cf. \cite{BS}, \cite{mamiproc} and the references therein)
\begin{equation}
f^{\ast\ast}(t)\leq c\int_{t}^{\infty}\frac{\omega_{q}(s^{1/n},f)}{s^{1/q}%
}\frac{ds}{s}. \label{ayer}%
\end{equation}

Let $w$ be an isoperimetric weight (i.e. $\frac{1}{w}\in L^{n,\infty}).$ Let
$\alpha>0$, $1\leq q<\infty$, $0<\theta<1,$ with $\theta<n/q.$ Then the
following estimate holds,
\[
\left\Vert f\right\Vert _{L^{q}}\leq cr\left\Vert f\right\Vert _{\mathring
{B}_{q,q}^{\theta}(\mathbb{R}^{n})}+r^{-\alpha}\left\Vert w^{\theta\alpha
}f\right\Vert _{L^{q}},
\]

where $c>0$ is an absolute constant. Indeed, following a familiar argument we
have
\begin{align*}
\left\Vert f\right\Vert _{L^{q}}  &  =\left\Vert f\left(  \frac{w}{w}\right)
^{\theta}\right\Vert _{L^{q}}\leq\left\Vert f\left(  \frac{w}{w}\right)
^{\theta}\chi_{\left\{  w\leq r^{1/\theta}\right\}  }\right\Vert _{L^{q}%
}+\left\Vert f\left(  \frac{w}{w}\right)  ^{\theta}\chi_{\left\{
w>r^{1/\theta}\right\}  }\right\Vert _{L^{q}}\\
&  \leq r\left\Vert f\left(  \frac{1}{w}\right)  ^{\theta}\right\Vert _{L^{q}%
}+\left\Vert f\left(  \frac{w}{w}\right)  ^{\theta\alpha}\chi_{\left\{
w>r^{1/\theta}\right\}  }\right\Vert _{L^{q}}\\
&  \leq r\left\Vert f\left(  \frac{1}{w}\right)  ^{\theta}\right\Vert _{L^{q}%
}+r^{-\alpha}\left\Vert w^{\alpha\theta}f\right\Vert _{L^{q}}.
\end{align*}
It remains to estimate the first term,
\begin{align*}
\left\Vert f\left(  \frac{1}{w}\right)  ^{\theta}\right\Vert _{L^{q}}  &
\leq\left\Vert f^{\ast}(t)\left(  \left(  \frac{1}{w}\right)  ^{\ast
}(t)\right)  ^{\theta}t^{\theta/n}t^{-\theta/n}\right\Vert _{L^{q}}\\
&  \leq\left\Vert \frac{1}{w}\right\Vert _{L^{n,\infty}}^{\theta}\left\Vert
f^{\ast}(t)t^{-\theta/n}\right\Vert _{L^{q}}\\
&  \leq\left\Vert \frac{1}{w}\right\Vert _{L^{n,\infty}}^{\theta}\left\Vert
f^{\ast\ast}(t)t^{-\theta/n}\right\Vert _{L^{q}}\\
&  \leq c\left\Vert \frac{1}{w}\right\Vert _{L^{n,\infty}}^{\theta}\left\Vert
t^{-\theta/n}\int_{t}^{\infty}\frac{\omega_{q}(s^{1/n},f)}{s^{1/q}}\frac
{ds}{s}\right\Vert _{L^{q}}\text{ \ \ (by (\ref{ayer}))}\\
&  =c\left\Vert \frac{1}{w}\right\Vert _{L^{n,\infty}}^{\theta}\left\Vert
t^{-\theta/n}\int_{t}^{\infty}\left(  \frac{s^{-\theta/n}\omega_{q}%
(s^{1/n},f)}{s^{1/q}}\right)  s^{\theta/n}\frac{ds}{s}\right\Vert _{L^{q}}\\
&  \leq C\left\Vert \frac{1}{w}\right\Vert _{L^{n,\infty}}^{\theta}\left\Vert
\frac{s^{-\theta/n}\omega_{q}(s^{1/n},f)}{s^{1/q}}\right\Vert _{L^{q}}\text{
(since }\theta<n/q)\\
&  =C\left\Vert \frac{1}{w}\right\Vert _{L^{n,\infty}}^{\theta}\left\Vert
f\right\Vert _{\mathring{B}_{q,q}^{\theta}(\mathbb{R}^{n})}.
\end{align*}

Using the inequalities of \cite{mamiaster} it is possible to extend these
results to Besov spaces on metric spaces but the development is too long and
technical, and falls outside the scope of the present paper.

\section{Rearrangement Invariant Uncertainty Inequalities\label{unrii}}

In this section we obtain uncertainty inequalities modeled on (\ref{unl1}),
where $L^{1}$ is replaced by a suitable r.i. space.

Our approach is based on the following Sobolev inequalities for r.i. spaces
(cf. \cite{mamiadv} where results of this type were obtained with more
restrictions on the ambient measure space).

\begin{theorem}
\label{teopp}Let $X$ be an r.i. space on $\Omega$ such that $\tilde{Q}$ is
bounded on $\bar{X}.$ The following statements hold

\begin{enumerate}
\item Suppose that $\mu(\Omega)<\infty.$ Then,
\begin{equation}
c_{X,I}=\left\Vert \frac{I(t)}{t}\chi_{(0,\mu(\Omega)/2)}(t)\right\Vert
_{\bar{X}}<\infty, \label{cubo}%
\end{equation}
and for all bounded functions\ $f\in Lip(\Omega),\ $we have
\begin{equation}
\left\Vert f_{\mu}^{\ast}(t)\frac{I(t)}{t}\chi_{(0,\mu(\Omega)/2)}%
(t)\right\Vert _{\bar{X}}\leq\left\Vert \tilde{Q}_{I}\right\Vert _{\bar
{X}\rightarrow\bar{X}}\left\Vert \left\vert \nabla f\right\vert \right\Vert
_{X}+\frac{2c_{X,I}}{\mu(\Omega)}\int_{\Omega}\left\vert f(x)\right\vert d\mu.
\label{salvo}%
\end{equation}

\item If $\mu(\Omega)=\infty,$ then
\begin{equation}
\left\Vert f_{\mu}^{\ast}(t)\frac{I(t)}{t}\right\Vert _{\bar{X}}\leq\left\Vert
\tilde{Q}_{I}\right\Vert _{\bar{X}\rightarrow\bar{X}}\left\Vert \left\vert
\nabla f\right\vert \right\Vert _{X},\text{ \ \ \ }f\in Lip_{0}(\Omega).
\label{salvo1}%
\end{equation}

\end{enumerate}
\end{theorem}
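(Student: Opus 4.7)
The plan is to derive both (\ref{salvo}) and (\ref{salvo1}) from a single Maz'ya--Talenti type pointwise rearrangement--gradient inequality, which itself follows from the co-area formula combined with (\ref{ledoux}). Once this pointwise inequality is in hand, the remaining argument is a short passage through the operator $\tilde Q_I$, with the correction term in the finite-measure case appearing naturally from a Chebyshev-type bound on $f^*_\mu(\mu(\Omega)/2)$.

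\emph{Step one: the pointwise gradient--rearrangement inequality.} I first establish
\[
-(f^*_\mu)'(t)\, I(t) \;\le\; |\nabla f|^*_\mu(t) \qquad \text{for a.e. } t\in(0,\mu(\Omega)),
\]
valid for $f\in Lip(\Omega)$ in the finite-measure case and for $f\in Lip_0(\Omega)$ in the infinite-measure case. The key ingredient is the co-area identity $\int_\Omega|\nabla f|\,d\mu=\int_0^\infty\mu^+(\{|f|>s\})\,ds$ together with the isoperimetric bound $\mu^+(\{|f|>s\})\ge I(\mu_f(s))$, applied to the truncations $(|f|-s)_+$ and combined with the Hardy--Littlewood rearrangement inequality (cf.\ \cite{mamiadv}). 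I expect this, though standard, to be the most delicate step, as it requires the co-area machinery in the present metric setting and a careful passage from the level-set information to a pointwise statement about $f^*_\mu$.

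\emph{Step two: integrating and recognizing $\tilde Q_I$.} Integrating the inequality of step one from $t$ to $\mu(\Omega)/2$ produces
\[
f^*_\mu(t)\;\le\; f^*_\mu(\mu(\Omega)/2) \;+\; \int_t^{\mu(\Omega)/2} \frac{|\nabla f|^*_\mu(s)}{I(s)}\,ds,
\]
where in the infinite-measure case $\mu(\Omega)/2$ is interpreted as $\infty$ and $f^*_\mu(\infty)=0$ since $f\in Lip_0(\Omega)$. Multiplying through by $I(t)/t$, and in the finite case also by $\chi_{(0,\mu(\Omega)/2)}(t)$, the integral term on the right becomes exactly $\tilde Q_I(|\nabla f|^*_\mu)(t)$.

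\emph{Step three: taking $\bar X$-norms.} Applying $\|\cdot\|_{\bar X}$, the assumed boundedness of $\tilde Q_I$ on $\bar X$ together with the identity $\||\nabla f|^*_\mu\|_{\bar X}=\||\nabla f|\|_X$ yield (\ref{salvo1}) at once, since the extra term vanishes. For the finite-measure case, the monotonicity of $f^*_\mu$ gives the Chebyshev-type bound
\[
f^*_\mu(\mu(\Omega)/2)\;\le\;\frac{2}{\mu(\Omega)}\int_0^{\mu(\Omega)/2}f^*_\mu(s)\,ds\;\le\;\frac{2}{\mu(\Omega)}\int_\Omega|f|\,d\mu,
\]
which upon multiplication by $c_{X,I}$ produces precisely the correction term in (\ref{salvo}). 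To establish (\ref{cubo}) itself, I test the bounded operator $\tilde Q_I$ on $g=\chi_{(\mu(\Omega)/4,\mu(\Omega)/2)}$: for $t\in(0,\mu(\Omega)/4)$ one has $\tilde Q_I g(t)=c_0\, I(t)/t$ with $c_0=\int_{\mu(\Omega)/4}^{\mu(\Omega)/2}ds/I(s)>0$, so the restriction of $I(t)/t$ to $(0,\mu(\Omega)/4)$ lies in $\bar X$; continuity of $I(t)/t$ on $[\mu(\Omega)/4,\mu(\Omega)/2]$ together with the embedding $L^\infty\subset\bar X$ available when $\mu(\Omega)<\infty$ completes the argument.
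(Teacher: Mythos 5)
Your overall architecture (FTC representation, recognition of $\tilde Q_I$, the Chebyshev bound for $f_\mu^*(\mu(\Omega)/2)$, and the test-function argument for (\ref{cubo})) matches the paper's proof. But Step one contains a genuine error: the pointwise inequality
\[
-(f_\mu^*)'(t)\,I(t)\;\le\;|\nabla f|_\mu^*(t)\qquad\text{a.e.}
\]
is false in general, and the co-area/truncation argument you sketch does not yield it. What that argument yields is information localized at the \emph{height} $s=f_\mu^*(t)$, whereas $|\nabla f|_\mu^*(t)$ is the global rearrangement of the gradient; these need not be ordered pointwise. Concretely, take $\Omega=\mathbb{R}$ with Lebesgue measure (so $I\equiv 2$) and $f\in Lip_0(\mathbb{R})$ supported on $[0,2]$, symmetric about $x=1$, with $|f'|=M$ on a tiny set of measure $2\varepsilon$ near the endpoints of the support and $|f'|=\delta\ll M$ elsewhere. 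The steep part is traversed at \emph{low} heights, whose superlevel sets have measure close to $2$; hence $-(f_\mu^*)'(t)I(t)=M$ for $t$ near $2$, while $|\nabla f|_\mu^*(t)=\delta$ for all $t>2\varepsilon$. What \emph{is} true (and is the content of Lemma \ref{ll1}(3), via the Calder\'on-type majorization $\int_0^t(-(f_n)_\mu^*)'(s)I(s)\,ds\le\int_0^t|\nabla f_n|_\mu^*(s)\,ds$ together with (\ref{hardyine})) is only the norm-level inequality $\left\Vert (-(f_n)_\mu^*)'(\cdot)I(\cdot)\right\Vert_{\bar X}\le\left\Vert |\nabla f_n|\right\Vert_X$. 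Consequently you cannot replace the integrand by $|\nabla f|_\mu^*(s)/I(s)$ inside the integral in Step two; you must keep the exact identity
\[
f_\mu^*(t)\frac{I(t)}{t}\chi_{(0,\mu(\Omega)/2)}(t)=\tilde Q_I\Bigl((-f_\mu^*)'(\cdot)I(\cdot)\Bigr)(t)+f_\mu^*(\mu(\Omega)/2)\frac{I(t)}{t}\chi_{(0,\mu(\Omega)/2)}(t),
\]
apply $\Vert\tilde Q_I\Vert_{\bar X\to\bar X}$ to the first term, and only then invoke the norm inequality above. This is exactly how the paper proceeds, and it is not a cosmetic difference: your version asserts a strictly stronger, false, pointwise statement.

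A secondary gap: your Step two applies the fundamental theorem of calculus to $f_\mu^*$, which requires $f_\mu^*$ to be locally absolutely continuous; this is not automatic for a general bounded Lipschitz $f$ on a general metric measure space. The paper handles this by first passing to the approximating sequence $(f_n)$ of Lemma \ref{ll1}, for which $(f_n)_\mu^*$ is locally absolutely continuous and $|\nabla f_n|\le(1+\tfrac1n)|\nabla f|$, and then letting $n\to\infty$ using $f_n\to f$ in $L^1$. Your treatment of (\ref{cubo}) and of the boundary term via $f_\mu^*(\mu(\Omega)/2)\le\frac{2}{\mu(\Omega)}\int_\Omega|f|\,d\mu$ is correct and coincides with the paper's.
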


The proof of this theorem will be given at the end of this section. First we
consider the corresponding uncertainty inequalities.

\begin{theorem}
Let $X$ be an r.i. space on $\Omega$ such that $\tilde{Q}$ is bounded on
$\bar{X}.$ Let $w$ be an isoperimetric weight and let $\alpha>0,$ Then, there
exists a constant $C=C(w,X)$ such that
\[
\left\Vert f\right\Vert _{X}\leq rC\left\Vert \left\vert \nabla f\right\vert
\right\Vert _{X}+r^{-\alpha}\left\Vert w^{\alpha}f\right\Vert _{X},\text{ for
all }r>0,
\]
and for all bounded $f\in Lip(\Omega)$ such that $m(f)=0$ or $\int_{\Omega
}fd\mu=0$ if $\mu(\Omega)<\infty$ (or for all $f\in Lip_{0}(\Omega)$ if
$\mu(\Omega)=\infty).$
\end{theorem}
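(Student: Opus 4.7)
The plan is to adapt the splitting scheme of Theorem \ref{Teounl1} to the rearrangement invariant setting, with the Sobolev estimates of Theorem \ref{teopp} playing the role that the co-area formula (\ref{ledoux}) and the local $L^{1}$ Poincar\'{e} inequality (\ref{poinlocal}) played there, and with the Marcinkiewicz characterization of isoperimetric weights (Lemma \ref{lmar}) supplying the pointwise bound $\chi_{\{w\leq r\}}\leq rW$, where $W:=1/w$. For every $r>0$ I start from
\[
\left\Vert f\right\Vert _{X}\leq\left\Vert f\chi_{\{w\leq r\}}\right\Vert _{X}+\left\Vert f\chi_{\{w>r\}}\right\Vert _{X},
\]
and handle the tail piece exactly as in Theorem \ref{Teounl1}: on $\{w>r\}$ one has $1<(w/r)^{\alpha}$, whence $\left\Vert f\chi_{\{w>r\}}\right\Vert _{X}\leq r^{-\alpha}\left\Vert w^{\alpha}f\right\Vert _{X}$. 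Everything thus reduces to bounding the body piece by $rC\left\Vert \left\vert \nabla f\right\vert \right\Vert _{X}$.

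In the infinite measure case the body piece is immediately dispatched by the r.i.\ Strichartz inequality derived in Section \ref{seccstrich}. Using $\chi_{\{w\leq r\}}\leq rW$ pointwise, followed by Hardy--Littlewood, the pointwise bound $W_{\mu}^{\ast}(t)\Phi(t)\leq\left\Vert W\right\Vert _{M(\Phi)}$, and finally (\ref{salvo1}),
\[
\left\Vert f\chi_{\{w\leq r\}}\right\Vert _{X}\leq r\left\Vert fW\right\Vert _{X}\leq r\left\Vert f_{\mu}^{\ast}W_{\mu}^{\ast}\right\Vert _{\bar{X}}\leq r\left\Vert W\right\Vert _{M(\Phi)}\left\Vert \tilde{Q}_{I}\right\Vert _{\bar{X}\to\bar{X}}\left\Vert \left\vert \nabla f\right\vert \right\Vert _{X},
\]
with $\left\Vert W\right\Vert _{M(\Phi)}=C(w)$ by Lemma \ref{lmar}. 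Combined with the tail estimate this yields the desired inequality with $C=C(w)\left\Vert \tilde{Q}_{I}\right\Vert _{\bar{X}\to\bar{X}}$.

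For the finite measure case I would first establish a localized rearrangement invariant Poincar\'{e} inequality of independent interest, namely: there exists $K=K(X,I,\mu(\Omega))$ such that for every measurable $A\subset\Omega$ with $\mu(A)\leq\mu(\Omega)/2$,
\[
\left\Vert (f-m(f))\chi_{A}\right\Vert _{X}\leq K\,\frac{\mu(A)}{I(\mu(A))}\left\Vert \left\vert \nabla f\right\vert \right\Vert _{X}.
\]
This I derive by applying (\ref{salvo}) to $f-m(f)$: the main term on the right already has the desired shape, while the finite-measure correction $\frac{2c_{X,I}}{\mu(\Omega)}\int|f-m(f)|\,d\mu$ is absorbed into $\left\Vert \left\vert \nabla f\right\vert \right\Vert _{X}$ through the global Poincar\'{e} inequality (\ref{pr1}) together with the continuous embedding $X(\Omega)\hookrightarrow L^{1}(\Omega)$ available in finite measure. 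The factor $\mu(A)/I(\mu(A))$ then pops out of the norm by noting that $((f-m(f))\chi_{A})_{\mu}^{\ast}$ is supported in $[0,\mu(A)]$ and that $t\mapsto I(t)/t$ is non-increasing on $(0,\mu(\Omega)/2)$. Given this localized inequality, the body piece is handled exactly as in the proof of Theorem \ref{Teounl1}: when $m(f)=0$, take $A=\{w\leq r\}$ and apply the isoperimetric weight bound (\ref{unouno}); when $\int_{\Omega}f\,d\mu=0$, reduce to the previous case via the median estimate (\ref{lestima}), mimicking case (ii) of that proof.

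The main obstacle is the derivation of the localized r.i.\ Poincar\'{e} inequality, and specifically the clean absorption of the finite-measure correction from (\ref{salvo}) into $\left\Vert \left\vert \nabla f\right\vert \right\Vert _{X}$; it is this step that forces the constant $C$ to depend on $X$ (and, implicitly, on $I$ and $\mu(\Omega)$) rather than only on the weight $w$. A secondary bookkeeping nuisance is the subcase $\mu(\{w\leq r\})>\mu(\Omega)/2$, which requires a global r.i.\ Poincar\'{e} bound $\left\Vert f-m(f)\right\Vert _{X}\leq K'\left\Vert \left\vert \nabla f\right\vert \right\Vert _{X}$ to cover all $r>0$; this in turn reduces to estimating the tail $\left\Vert (f-m(f))_{\mu}^{\ast}\chi_{(\mu(\Omega)/2,\mu(\Omega))}\right\Vert _{\bar{X}}$, which is easily done through the median estimate (\ref{lestima}) and the embedding into $L^{1}$.
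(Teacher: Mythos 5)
Your infinite-measure argument is correct and is essentially the paper's proof: extract the factor $r$ via $\chi_{\{w\leq r\}}\leq rW$, pass to rearrangements by Hardy--Littlewood, use $W_{\mu}^{\ast}(t)\leq\left\Vert W\right\Vert _{M(\Phi)}I(t)/t$, and invoke (\ref{salvo1}). Your localized r.i.\ Poincar\'{e} inequality for the finite-measure case is also derivable as you describe, and it does dispose of the normalization $m(f)=0$ (including the subcase $\mu(\{w\leq r\})>\mu(\Omega)/2$).

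The gap is in the finite-measure case with the normalization $\int_{\Omega}f\,d\mu=0$. The reduction \textquotedblleft mimicking case (ii)\textquotedblright\ of Theorem \ref{Teounl1} hinges on the identity $\left\Vert m(f)\chi_{\{w\leq r\}}\right\Vert _{L^{1}}=\bigl|\int_{\{w\leq r\}}m(f)\,d\mu\bigr|$, which is what lets the cancellation $\int_{\Omega}f\,d\mu=0$ convert the median term into the two already-controlled pieces. In a general r.i.\ space one instead has $\left\Vert m(f)\chi_{\{w\leq r\}}\right\Vert _{X}=\left\vert m(f)\right\vert \varphi_{X}(s)$ with $s=\mu(\{w\leq r\})$, and recovering the integral costs the factor $\varphi_{X}(s)/s=1/\varphi_{X^{\prime}}(s)$. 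The piece coming from $\int_{\{w>r\}}\left\vert f\right\vert d\mu$ then returns to the $X$-norm only at the price of a ratio of fundamental functions, $\varphi_{X^{\prime}}(\mu(\{w>r\}))/\varphi_{X^{\prime}}(s)$, which is unbounded as $r\rightarrow0$ whenever $s\rightarrow0$ and $X\neq L^{1}$ (e.g.\ $X=L^{2}$ on the sphere with $w(x)=d(x_{0},x)$ gives $s\approx r^{n}$ and a ratio $\approx r^{-n/2}$). The crude alternative $\left\vert m(f)\right\vert \leq\frac{2}{\mu(\Omega)}\int\left\vert f\right\vert d\mu\lesssim\left\Vert \left\vert \nabla f\right\vert \right\Vert _{X}$ fails for a different reason: it produces a term without the factor $r$, which cannot be dominated by $rC\left\Vert \left\vert \nabla f\right\vert \right\Vert _{X}$ for small $r$. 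So the median-splitting route does not close. The paper avoids the issue entirely by never isolating the median in the finite-measure case: it runs the same scheme as in infinite measure, writing $\left\Vert f\chi_{\{w\leq r\}}\right\Vert _{X}\leq r\left\Vert f/w\right\Vert _{X}$ first, so that $r$ multiplies everything, and then bounds $\left\Vert f/w\right\Vert _{X}$ via (\ref{salvo}) applied to $f$ itself; the resulting $L^{1}$ correction $\frac{2c_{X,I}}{\mu(\Omega)}\int_{\Omega}\left\vert f\right\vert d\mu$ is controlled by $\left\Vert \left\vert \nabla f\right\vert \right\Vert _{X}$ under either normalization through the global Poincar\'{e} inequality of Remark \ref{poincar} and H\"{o}lder. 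Replacing your finite-measure argument by that one repairs the proof.
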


\begin{proof}
a. Suppose that $\mu(\Omega)<\infty.$ Let $f\in Lip(\Omega),$ and let $w$ be
an isoperimetric weight. Then, for all $\alpha>0,$ we have%
\begin{align}
\left\Vert f\right\Vert _{X}  &  =\left\Vert f\frac{w}{w}\right\Vert _{X}%
\leq\left\Vert f\frac{w}{w}\chi_{\left\{  w\leq r\right\}  }\right\Vert
_{X}+\left\Vert f\frac{w}{w}\chi_{\left\{  w>r\right\}  }\right\Vert
_{X}\label{primerpaso}\\
&  \leq r\left\Vert f\frac{1}{w}\right\Vert _{X}+r^{-\alpha}\left\Vert
w^{\alpha}f\right\Vert _{X}.\nonumber
\end{align}
To estimate the first term in (\ref{primerpaso}), we write
\begin{align*}
\left\Vert f\frac{1}{w}\right\Vert _{X}  &  =\left\Vert \left(  f\frac{1}%
{w}\right)  _{\mu}^{\ast}\right\Vert _{\bar{X}}\\
&  \leq\left\Vert f_{\mu}^{\ast}(s)\left(  \frac{1}{w}\right)  _{\mu}^{\ast
}(s)\right\Vert _{\bar{X}}\text{ \ (by (\ref{a3}) and (\ref{hardyine}))}\\
&  \leq2\left\Vert f_{\mu}^{\ast}(s)\left(  \frac{1}{w}\right)  _{\mu}^{\ast
}(s)\chi_{(0,\mu(\Omega)/2)}(s)\right\Vert _{\bar{X}}\\
&  =2\left\Vert f_{\mu}^{\ast}(s)\left(  \frac{1}{w}\right)  _{\mu}^{\ast
}(s)\frac{s}{I(s)}\frac{I(s)}{s}\chi_{(0,\mu(\Omega)/2)}(s)\right\Vert
_{\bar{X}}\\
&  \leq2\left(  \sup_{0<s<\mu(\Omega)/2}\left(  \frac{1}{w}\right)  _{\mu
}^{\ast}(s)\frac{s}{I(s)}\right)  \left\Vert f_{\mu}^{\ast}(s)\frac{I(s)}%
{s}\chi_{(0,\mu(\Omega)/2)}(s)\right\Vert _{\bar{X}}\\
&  =2\left\Vert W\right\Vert _{M\left(  \Phi\right)  }\left\Vert f_{\mu}%
^{\ast}(s)\frac{I(s)}{s}\chi_{(0,\mu(\Omega)/2)}(s)\right\Vert _{\bar{X}}\\
&  \leq2\left\Vert W\right\Vert _{M\left(  \Phi\right)  }\left(  \left\Vert
\tilde{Q}_{I}\right\Vert _{\bar{X}\rightarrow\bar{X}}\left\Vert \left\vert
\nabla f\right\vert \right\Vert _{X}+\frac{2c_{XI}}{\mu(\Omega)}\int_{\Omega
}\left\vert f\right\vert d\mu\right)  \text{ (by (\ref{salvo})).}%
\end{align*}

Let \ $f\in Lip(\Omega)$ be such that $m(f)=0$ or $\int_{\Omega}fd\mu=0,$ by
Poincar\'{e}'s inequality (cf. Remark \ref{poincar})%
\begin{align*}
\int_{{\Omega}}\left\vert f\right\vert d\mu &  \leq\frac{\mu({\Omega})}%
{I(\mu({\Omega})/2)}\int_{{\Omega}}\left\vert \nabla f\right\vert d\mu\\
&  \leq\frac{\mu({\Omega})}{I(\mu({\Omega})/2)}\left\Vert \left\vert \nabla
f\right\vert \right\Vert _{X}\frac{\mu({\Omega})}{\left\Vert \chi_{\Omega
}\right\Vert _{X}}\text{ (by H\"{o}lder's inequality)}.
\end{align*}
Summarizing,%
\[
\left\Vert f\right\Vert _{X}\leq rC\left\Vert \left\vert \nabla f\right\vert
\right\Vert _{X}+r^{-\alpha}\left\Vert w^{\alpha}f\right\Vert _{X},
\]
where $C=2\left\Vert W\right\Vert _{M\left(  \Phi\right)  }\left\Vert
\tilde{Q}_{I}\right\Vert _{\bar{X}\rightarrow\bar{X}}+\frac{2c_{XI}\mu
({\Omega})}{I(\mu({\Omega})/2)\left\Vert \chi_{\Omega}\right\Vert _{X}}.$

b. $\mu(\Omega)=\infty.$ We follow the same steps as in the previous case, but
now we use (\ref{salvo1}) instead (\ref{salvo}). Notice that the extra
$L^{1}-$term does not appear in this case.
\end{proof}

\subsubsection{The proof of Theorem \ref{teopp}.}

In this section we prove Theorem \ref{teopp}. We need the following Lemma (see
\cite{mamiaster}, \cite{mamiconc}).

\begin{lemma}
\label{ll1}Let $\left(  \Omega,\mu,d\right)  $ be a metric space and let $I$
be an isoperimetric estimator for $\left(  \Omega,\mu,d\right)  .$ Let $h$ be
a bounded Lip function on $\Omega$. Then there exists a sequence of bounded
functions $\left(  h_{n}\right)  _{n}$ $\subset Lip(\Omega)$ , such that

\begin{enumerate}
\item
\begin{equation}
\left|  \nabla h_{n}(x)\right|  \leq(1+\frac{1}{n})\left|  \nabla h(x)\right|
,\text{ \ }x\in\Omega. \label{cota01}%
\end{equation}

\item
\begin{equation}
h_{n}\underset{n\rightarrow0}{\rightarrow}h\text{ in }L^{1}. \label{converge}%
\end{equation}

\item The functions $\left(  h_{n}\right)  _{\mu}^{\ast}$ are locally
absolutely continuous and for any r.i. space $X$ on $\Omega$%
\begin{equation}
\left\Vert \left(  -\left\vert h_{n}\right\vert _{\mu}^{\ast}\right)
^{\prime}(\cdot)I(\cdot)\right\Vert _{\bar{X}}\leq\left\Vert \left\vert \nabla
h_{n}\right\vert \right\Vert _{X},\text{ for all }n\in\mathbb{N}\text{.}
\label{aa}%
\end{equation}

\end{enumerate}
\end{lemma}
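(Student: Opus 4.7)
The plan is: (i) construct $h_n$ by an intrinsic regularization of $h$ tailored to its level sets, delivering (\ref{cota01}) and (\ref{converge}) together with local absolute continuity of $(|h_n|)_\mu^\ast$; (ii) for each such $h_n$, derive the rearrangement inequality (\ref{aa}) from the co-area formula, the isoperimetric estimator, and a Hardy-type rearrangement argument.

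For (i), I would combine $h$ with a Lipschitz reparameterization of slope in $[1,1+\tfrac{1}{n}]$ together with a regularization of the level-set structure of $|h|$, following the constructions of \cite{mamiaster,mamiconc}, chosen so that the distribution function $\mu_{|h_n|}$ has no jumps nor singular continuous parts in the interior of its range. Its generalized inverse $(|h_n|)_\mu^\ast$ is then locally absolutely continuous. The chain rule in the metric setting yields (\ref{cota01}), while dominated convergence applied to the uniformly bounded sequence $h_n\to h$ yields (\ref{converge}).

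For (ii), fix $n$. By the co-area formula applied to $h_n$ on the super-level set $\{|h_n|>|h_n|_\mu^\ast(t)\}$ and the lower bound $\mu^+\geq I\circ\mu$,
\begin{equation*}
\int_{\{|h_n|>|h_n|_\mu^\ast(t)\}}\!\!|\nabla h_n|\,d\mu
\;=\;\int_{|h_n|_\mu^\ast(t)}^{\infty}\mu^+\{|h_n|>s\}\,ds
\;\ge\;\int_{|h_n|_\mu^\ast(t)}^{\infty}I\bigl(\mu_{|h_n|}(s)\bigr)\,ds.
\end{equation*}
The substitution $u=\mu_{|h_n|}(s)$, legitimate by the local absolute continuity secured in (i), rewrites the last term as $\int_0^t\bigl(-(|h_n|)_\mu^\ast\bigr)'(u)\,I(u)\,du$, while Hardy-Littlewood bounds the first integral by $\int_0^t|\nabla h_n|_\mu^\ast(u)\,du$. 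Hence
\begin{equation*}
\int_0^t \bigl(-(|h_n|)_\mu^\ast\bigr)'(u)\,I(u)\,du \;\le\; \int_0^t |\nabla h_n|_\mu^\ast(u)\,du,\qquad 0<t<\mu(\Omega),
\end{equation*}
which, via the Hardy order principle (\ref{hardyine}), upgrades to the r.i.\ norm inequality (\ref{aa}) in any r.i.\ space $X$.

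The main obstacle is Step (i). Since (\ref{cota01}) is a pointwise bound, any set on which $|\nabla h|$ vanishes forces $h_n$ to be locally constant there as well, so flat plateaus of $|h|$ cannot be removed by a naive perturbation; moreover, in the abstract metric-measure setting no convolutional mollification is available. The construction must therefore redistribute level sets along directions compatible with the existing gradient of $h$ while producing approximations whose rearrangements $(|h_n|)_\mu^\ast$ are locally absolutely continuous, which is the technical heart of the argument and is carried out in \cite{mamiaster,mamiconc}.
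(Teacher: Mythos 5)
The paper offers no proof of Lemma \ref{ll1} at all --- it is imported verbatim from \cite{mamiaster} and \cite{mamiconc} --- so the only meaningful comparison is with the argument in those references, whose two-step strategy (regularize so that the rearrangement is locally absolutely continuous, then run a co-area/isoperimetric/Hardy argument) your sketch does follow in outline. The genuine gap is in your Step (ii), at the final appeal to (\ref{hardyine}). Writing $\phi(u)=\bigl(-(|h_n|)_{\mu}^{\ast}\bigr)'(u)\,I(u)$, your chain of inequalities produces
\[
\int_{0}^{t}\phi(u)\,du\;\leq\;\int_{0}^{t}\left\vert \nabla h_{n}\right\vert _{\mu}^{\ast}(u)\,du,\qquad 0<t<\mu(\Omega),
\]
i.e.\ control of the integrals of $\phi$ over the \emph{intervals} $(0,t)$ only. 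But (\ref{hardyine}) requires the same bound for $\int_{0}^{t}\phi^{\ast}(u)\,du$, the integral of the decreasing rearrangement of $\phi$ (equivalently of $\sup\{\int_{E}\phi:\ |E|=t\}$), and $\phi$ is not decreasing in general: $(-(|h_n|)_\mu^{\ast})'$ can oscillate and $I$ is not monotone on $(0,\mu(\Omega))$ in the finite-measure case. The inequality $\int_{0}^{t}\phi\leq\int_{0}^{t}\phi^{\ast}$ points the wrong way, and the unrearranged hypothesis genuinely does not imply the norm conclusion: with $\bar{X}=L^{\infty}$, $g=\chi_{(0,2)}$ and $\phi=2\chi_{(1,3/2)}$ one has $\int_{0}^{t}\phi\leq\int_{0}^{t}g$ for every $t>0$, yet $\Vert\phi\Vert_{L^{\infty}}=2>1=\Vert g\Vert_{L^{\infty}}$. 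The missing idea --- which is the actual content of the symmetrization theorems of \cite{mamiadv}, \cite{mamiaster} --- is to apply your superlevel-set estimate not to the single set $\{|h_{n}|>(|h_{n}|)_{\mu}^{\ast}(t)\}$ but to the truncations of $h_{n}$ over the level bands corresponding to an arbitrary finite union $E=\bigcup_{i}(a_{i},b_{i})$ of total measure $t$, sum the resulting inequalities, and take the supremum over such $E$; only then does one obtain $\int_{0}^{t}\phi^{\ast}\leq\int_{0}^{t}\left\vert \nabla h_{n}\right\vert _{\mu}^{\ast}$ and hence (\ref{aa}). (For $X=L^{1}$, where only $t=\mu(\Omega)$ matters, the unrearranged version suffices, which is why this issue never surfaces in the $L^{1}$ arguments elsewhere in the paper.)

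Two smaller points. First, the co-area identity you write for the restriction of $\int|\nabla h_{n}|\,d\mu$ to a superlevel set is in general only an inequality ``$\geq$'' in the metric setting (compare (\ref{ledoux})); this is harmless since you use it in the correct direction, but it should not be stated as an equality. Second, Step (i) is deferred wholesale to \cite{mamiaster}, \cite{mamiconc}; that is defensible, since the paper does exactly the same, but as written it is a citation rather than a proof, and your own description of the construction is too vague to check --- in particular the claim that dominated convergence gives (\ref{converge}) needs either $\mu(\Omega)<\infty$ or compact support of $h$, which is consistent with how the lemma is actually used but should be said.
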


\begin{proof}
(of Theorem \ref{teopp})

a. Suppose that $\mu(\Omega)<\infty$. The fact that $c_{X,I}=\left\Vert
\frac{I(t)}{t}\chi_{(0,\mu(\Omega)/2)}(t)\right\Vert _{\bar{X}}<\infty,$
follows easily from the fact that $\tilde{Q}_{I}$ is bounded. Indeed, for
$0<t<\mu(\Omega)/4,$ we have that
\[
\tilde{Q}_{I}\chi_{(0,\mu(\Omega)/2)}(t)=\frac{I(t)}{t}\int_{t}^{\mu
(\Omega)/2}\frac{dr}{I(r)}\geq\frac{I(t)}{t}\int_{\mu(\Omega)/4}^{\mu
(\Omega)/2}\frac{dr}{I(r)},
\]
and (\ref{cubo}) follows.

Let $f$ be a bounded function in $Lip(\Omega)$. Let $\left(  f_{n}\right)
_{n}$ be the sequence associated to $f$ that is provided by Lemma \ref{ll1}.
Since $\left(  f_{n}\right)  _{\mu}^{\ast}$ is locally absolutely continuous,
by the fundamental theorem of calculus we have
\begin{align*}
A(t)  &  =\left(  f_{n}\right)  _{\mu}^{\ast}(t)\frac{I(t)}{t}\chi
_{(0,\mu(\Omega)/2)}(t)\\
&  =\frac{I(t)}{t}\int_{t}^{\mu(\Omega)/2}\left(  -\left(  f_{n}\right)
_{\mu}^{\ast}\right)  ^{\prime}(r)dr+\left(  f_{n}\right)  _{\mu}^{\ast}%
(\mu(\Omega)/2)\frac{I(t)}{t}\chi_{(0,\mu(\Omega)/2)}(t)\\
&  =\frac{I(t)}{t}\int_{t}^{\mu(\Omega)/2}\left(  -\left(  f_{n}\right)
_{\mu}^{\ast}\right)  ^{\prime}(r)I(r)\frac{dr}{I(r)}+\left(  f_{n}\right)
_{\mu}^{\ast}(\mu(\Omega)/2)\frac{I(t)}{t}\chi_{(0,\mu(\Omega)/2)}(t)\\
&  =\tilde{Q}_{I}\left(  \left(  -\left(  f_{n}\right)  _{\mu}^{\ast}\right)
^{\prime}(\cdot)I(\cdot)\right)  (t)+\left(  f_{n}\right)  _{\mu}^{\ast}%
(\mu(\Omega)/2)\frac{I(t)}{t}\chi_{(0,\mu(\Omega)/2)}(t).
\end{align*}
Thus,
\begin{align*}
\left\Vert A(t)\right\Vert _{\bar{X}}  &  \leq\left\Vert \tilde{Q}_{I}\left(
\left(  -\left(  f_{n}\right)  _{\mu}^{\ast}\right)  ^{\prime}(\cdot
)I(\cdot)\right)  (t)\right\Vert _{\bar{X}}+\left(  f_{n}\right)  _{\mu}%
^{\ast}(\mu(\Omega)/2)\left\Vert \frac{I(t)}{t}\chi_{(0,\mu(\Omega
)/2)}(t)\right\Vert _{\bar{X}}\\
&  =I+II.
\end{align*}
Now,
\begin{align*}
I  &  \leq\left\Vert \tilde{Q}_{I}\right\Vert _{\bar{X}\rightarrow\bar{X}%
}\left\Vert \left(  \left(  -\left(  f_{n}\right)  _{\mu}^{\ast}\right)
^{\prime}(\cdot)I(\cdot)\right)  (t)\right\Vert _{\bar{X}}\leq\left\Vert
\tilde{Q}_{I}\right\Vert _{\bar{X}\rightarrow\bar{X}}\left\Vert \left\vert
\nabla h_{n}\right\vert \right\Vert _{X}\text{ \ (by (\ref{aa}))}\\
&  \leq\left\Vert \tilde{Q}_{I}\right\Vert _{\bar{X}\rightarrow\bar{X}%
}(1+\frac{1}{n})\left\Vert \left\vert \nabla f\right\vert \right\Vert
_{X}\text{ \ (by (\ref{cota01})),}%
\end{align*}
and
\begin{align*}
II  &  \leq\left(  \frac{2}{\mu(\Omega)}\int_{\Omega}\left\vert f_{n}%
\right\vert (x)d\mu\right)  \left(  \left\Vert \frac{I(t)}{t}\chi
_{(0,\mu(\Omega)/2)}(t)\right\Vert _{\bar{X}}\right) \\
&  =\frac{2c}{\mu(\Omega)}\int_{\Omega}\left\vert f_{n}\right\vert (x)d\mu.
\end{align*}
Therefore,
\begin{align*}
\left\Vert f_{\mu}^{\ast}(t)\frac{I(t)}{t}\right\Vert  &  \leq\lim
\inf_{n\rightarrow\infty}\left(  \left\Vert \tilde{Q}_{I}\right\Vert _{\bar
{X}\rightarrow\bar{X}}(1+\frac{1}{n})\left\Vert \left\vert \nabla f\right\vert
\right\Vert _{X}+\frac{2c}{\mu(\Omega)}\int_{\Omega}\left\vert f_{n}%
(x)\right\vert d\mu\right) \\
&  =\left\Vert \tilde{Q}_{I}\right\Vert _{\bar{X}\rightarrow\bar{X}}\left\Vert
\left\vert \nabla f\right\vert \right\Vert _{X}+\frac{2C}{\mu(\Omega)}%
\int_{\Omega}\left\vert f(x)\right\vert d\mu\text{\ \ (by (\ref{converge}))}\\
&  =D\left(  \left\Vert \left\vert \nabla f\right\vert \right\Vert _{X}%
+\frac{2}{\mu(\Omega)}\int_{\Omega}\left\vert f(x)\right\vert d\mu\right)  .
\end{align*}

b. $\mu(\Omega)=\infty.$ The proof follows the same argument. Indeed, if $f\in
Lip_{0}(\Omega),$ then $f$ is bounded. Let $\left(  f_{n}\right)  _{n}$ the
sequence associated to $f$ that is provided by Lemma \ref{ll1}. Note that
$\left(  f_{n}\right)  _{\mu}^{\ast}$ is locally absolutely continuous, and
$\left(  f_{n}\right)  _{\mu}^{\ast\ast}(\infty)=0.$ Using the fundamental
theorem of calculus we find
\begin{align*}
\left(  f_{n}\right)  _{\mu}^{\ast}(t)\frac{I(t)}{t}  &  =\frac{I(t)}{t}%
\int_{t}^{\infty}\left(  -\left(  f_{n}\right)  _{\mu}^{\ast}\right)
^{\prime}(r)dr\\
&  =\tilde{Q}_{I}\left(  \left(  -\left(  f_{n}\right)  _{\mu}^{\ast}\right)
^{\prime}(\cdot)I(\cdot)\right)  (t),
\end{align*}
and we conclude the proof as in the previous case.
\end{proof}

\subsection{Final Remarks}

a. We should mention that in the literature one can find $L^{1}$ uncertainty
type inequalities that are not directly related to those treated in our paper.
For example, in \cite{lamo}, sharp constants are obtained for inequalities of
the following type (here $\Omega=\mathbb{R)}$,
\[
\left\Vert f\right\Vert _{1}\left\Vert f\right\Vert _{2}^{2}\leq c\left\Vert
\xi\hat{f}\right\Vert _{2}^{2}\left\Vert x^{2}f\right\Vert _{1}%
\]
or%
\[
\left\Vert f\right\Vert _{2}\leq c\left\Vert x^{2}f\right\Vert _{1}%
^{2/7}\left\Vert \xi\hat{f}\right\Vert _{2}^{5/7}.
\]

b. Multiplier inequalities have a long history. We mention two somewhat
related directions of inquiry that we find intriguing. The multiplier
inequalities exemplified by \cite{sickel}, and the long list of references
therein, and the potential spaces of radial functions exemplified by
\cite{napoli} and the references therein.

c. In this paper we have not considered discrete inequalities. We hope to
discuss the discrete world elsewhere.

\begin{acknowledgement}
Thanks are due to Michael Cwikel for an opportune e-mail that sparked this
research and, at a later stage, for a number of helpful suggestions to improve
the presentation. We are also grateful to the referee for several very useful
suggestions to improve the presentation. After this paper was completed we
learned from Stefan Steinerberger of his interesting related work on
Poincar\'{e} inequalities in the Euclidean setting (cf. \cite{stefan1}).
Moreover, in \cite{stefan1} one can also find a large set of references that
go all the way back to Yau \cite{yau}. In \cite{stefan}, Steinerberger also
studies uncertainty inequalities, but we should note that \cite{stefan1} and
\cite{stefan1} are independent of each other.
\end{acknowledgement}

\end{document}